\numberwithin{equation}{section}
\newcommand{\HH}{\mathcal{H}}
\newcommand{\HK}{\mathcal{K}}
\newcommand{\HM}{\mathcal{M}}
\newcommand{\HE}{\mathcal{E}}
\newcommand{\HB}{\mathcal{B}}
\newcommand{\HN}{\mathcal{N}}
\newcommand{\WPH}{{{\mathcal H} \odot {\mathcal H}}}
\newcommand{\dB}{\partial \mathbb B_d}
\newcommand{\D}{\mathbb{D}}
\newcommand{\C}{\mathbb{C}}
\newcommand{\N}{\mathbb{N}}
\newcommand{\R}{\mathbb{R}}
\newcommand{\Hbar}{\overline{\mathcal{H}}}
\newcommand{\ran}{\mathrm{ran \ }}
\newcommand{\la}{\langle}
\newcommand{\ra}{\rangle}
\newcommand{\Bd}{\mathbb B_d}
\newcommand{\Hol}{\operatorname{Hol}}
\newcommand{\clos}{\operatorname{clos}}
\def\HE{{\mathcal E}}
\def\om{\omega}
\newcommand{\Ker}[1]{\mathsf{Ker}~}
\theoremstyle{plain}
\newtheorem{theorem}{Theorem}[section]
\newtheorem{lemma}[theorem]{Lemma}
\newtheorem{corollary}[theorem]{Corollary}
\newtheorem{definition}[theorem]{Definition}
\theoremstyle{definition}
\newcommand{\Mult}{\operatorname{Mult}}
\newcommand{\Han}{\operatorname{Han}}
\begin{document}

\bibliographystyle{plain}

\thanks{}

\date{\today}

\title[Weak products of complete Pick spaces]{Weak products of complete Pick spaces.}
\author[A. Aleman]{Alexandru Aleman}
\address{Lund University, Mathematics, Faculty of Science, P.O. Box 118, S-221 00 Lund, Sweden}
\email{alexandru.aleman@math.lu.se}

\author[M. Hartz]{Michael Hartz}
\address{Department of Mathematics, Washington University in St. Louis, One Brookings Drive,
St. Louis, MO 63130, USA}
\email{mphartz@wustl.edu}
\thanks{M.H. was partially supported by an Ontario Trillium Scholarship and a Feodor Lynen Fellowship}

\author[J. M\raise.5ex\hbox{c}Carthy]{John E. M\raise.5ex\hbox{c}Carthy}
\address{Department of Mathematics, Washington University in St. Louis, One Brookings Drive,
St. Louis, MO 63130, USA}
\email{mccarthy@wustl.edu}
\thanks{J.M. was partially supported by National Science Foundation Grant DMS 1565243}

\author[S. Richter]{Stefan Richter}
\address{Department of Mathematics, University of Tennessee, 1403 Circle Drive, Knoxville, TN 37996-1320, USA}
\email{srichter@utk.edu}

\keywords{ Complete Pick space, Besov space, multiplier, Smirnov class}
\subjclass[2010]{Primary 46E22; Secondary 30H15, 30H25}

\begin{abstract} Let $\HH$ be the Drury-Arveson or Dirichlet space of the unit ball of $\C^d$. The weak product $\HH\odot\HH$ of $\HH$ is the collection of all functions $h$ that can be written as $h=\sum_{n=1}^\infty f_ng_n$, where $\sum_{n=1}^\infty \|f_n\|\|g_n\|<\infty$. We show that $\HH\odot \HH$ is contained in the Smirnov class of $\HH$, i.e. every function in $\HH\odot \HH$ is a  quotient of two multipliers of $\HH$, where the function in the denominator can be chosen to be cyclic in $\HH$. As a consequence we show that the map $\HN \to \clos_\WPH \HN$ establishes a 1-1 and onto correspondence between the multiplier invariant subspaces of $\HH$ and of $\WPH$.

  The results hold for many weighted Besov spaces $\HH$ in the unit ball of $\C^d$ provided the reproducing kernel has the complete Pick property.  One of our main technical lemmas states that for weighted Besov spaces $\HH$ that satisfy what we call the multiplier inclusion condition any bounded column multiplication operator $\HH \to \oplus_{n=1}^\infty \HH$ induces a bounded row multiplication operator  $\oplus_{n=1}^\infty \HH \to \HH$. For the Drury-Arveson space $H^2_d$ this leads to an alternate proof of the characterization of interpolating sequences in terms of weak separation and Carleson measure conditions.
\end{abstract}

\maketitle


\section{Introduction}
By a Hilbert (Banach) function space on a set $X$ we mean a Hilbert (Banach) space $\HB$ which consists of complex-valued functions on $X$ such that for each point  $w\in X$ the point evaluation functional $f\to f(w)$  is bounded on $\HB$.  Associated with every Banach function space $\HB$ we have the collection of multipliers $\Mult(\HB)=\{\varphi: X\to \C: \varphi \HB \subseteq \HB\}$. This is another Banach function space when equipped with the norm $\|\cdot\|_M$ that equals the operator norm of the induced multiplication operator $M_\varphi: \HB \to \HB, f\to\varphi f$, $f\in \HB, \varphi\in \Mult(\HB)$. We will call a function $f$ cyclic in $\HB$, if the set $\{\varphi f: \varphi \in \Mult(\HB)\}$ is dense in $\HB$.

 Each Hilbert function space $\HH$ has a reproducing kernel, i.e. a function $k:X \times X \to \C$ such that $f(w)=\la f, k_w\ra $ for all $w\in X$. Here $k_w(z)=k(z,w)$.  We say a reproducing kernel is normalized, if there is a $z_0\in X$ such that $k_{z_0}=1$.
 A Banach (Hilbert) space of analytic functions will be a Banach (Hilbert) function space which is contained in $\Hol(\Omega)$, the collection of holomorphic functions on the open set $\Omega \subseteq  \C^d$ for some $d\in \N$.

 In this paper a normalized complete Pick kernel will be a normalized reproducing kernel of the type $k_w(z)=\frac{1}{1-u_w(z)}$, where $u_w(z)$ is positive definite, i.e. for all $n\in \N, z_1,...,z_n \in X, $ and $a_1,..., a_n \in \C$ we have $\sum_{i,j}a_i \overline{a}_j u_{z_j}(z_i) \ge 0$.
 An important example of such a complete Pick kernel is the Szeg{\H{o}} kernel $k_w(z)=(1-\overline{w}z)^{-1}$. It is the reproducing kernel for the Hardy space $H^2$ of the unit disc $\D$, and it is fair to say that the function and operator theories associated with Hilbert function spaces with complete Pick kernels share many properties with the corresponding theories of $H^2$. We refer the reader to \cite{AgMcC} and \cite{AlHaMcCRiM} for some examples of this.
  In particular, it is a useful fact  that  Hilbert function spaces $\HH$ with a normalized complete Pick kernel
   are contained in the Smirnov class $N^+(\HH)$ associated with $\HH$,  \cite{AlHaMcCRiM}, where
   $$N^+(\HH)=\Big \{f=\frac{\varphi}{\psi}: \varphi, \psi \in \Mult(\HH), \psi \text{ cyclic in }\HH \Big\}.$$

At this point we mention two further important  examples of spaces with complete Pick kernels where the previous remark applies. The Dirichlet space of the unit disc,
$D=\{f\in \Hol(\D): \int_\D|f'|^2 dxdy <\infty\},$ it has reproducing kernel $k_w(z)=\frac{1}{\overline{w}z}\log \frac{1}{1-\overline{w}z}$ (see e.g. \cite{AgMcC}, Corollary 7.41, for the verification that $k_w(z)$ is a CNP kernel), and the Drury-Arveson space $H^2_d$ of analytic functions in the unit ball of $\C^d$. It is defined by the reproducing kernel $k_w(z)=\frac{1}{1-\la z, w\ra}$, $\la z,w\ra=\sum_{i=1}^d z_i\overline{w}_i$. See   \cite{DiriPrimer} and \cite{Shalit} for  indepth information about these spaces.

 Let
  $\HH$ be a Hilbert function space on a set $X$ . The weak product of $\HH$ is defined by
$$\HH\odot\HH= \Big\{\sum_{i=1}^\infty f_ig_i: f_i,g_i \in \HH, \sum_{i=1}^\infty \|f_i\|\|g_i\|<\infty\Big\},$$
where the norm on $\HH\odot\HH$ is given by $$\|h\|_{\HH\odot\HH}=\inf\Big\{\sum_{i=1}^\infty \|f_i\|\|g_i\|: h= \sum_{i=1}^\infty f_ig_i\Big\}.$$
One verifies that $\HH\odot\HH$ is a Banach function space, \cite{RiSuWeakProd} for the case of spaces of analytic functions, but the general case can be proved the same way (also see Section \ref{BackgroundWPH}). It is known that $H^2(\partial \Bd)\odot H^2(\partial \Bd)=H^1(\partial \Bd)$ and $L^2_a(\Bd)\odot L^2_a(\Bd)=L^1_a(\Bd)$, and there are similar results for weighted Bergman spaces, \cite{CRW}. We think of $\HH \odot \HH$ an analogue of $H^1$ for the function theory of the Hilbert function space $\HH$. It is known that for many examples of spaces $\HH$ one observes an analogue of the $H^1$-$BMO$-duality and a connection to  Carleson measures and  the theory of Hankel operators on $\HH$. In this paper we will add to this circle of ideas for Hilbert function spaces with normalized complete Pick kernels. We refer the reader to \cite{CRW}, \cite{ARSWBilinearForms}, and \cite{RiSuWeakProd} for further motivation and details about weak products.

Note that even for the cases where $\HH$ equals the Dirichlet or the Drury-Arveson space it is  unclear whether there is a simple description of the functions in $\WPH$.

 \begin{theorem} \label{IntroSmirnov1a} Let $\HH$ be a separable Hilbert function space on the non-empty set $X$ such that  the reproducing kernel for $\HH$ is a normalized complete Pick kernel. Then $$\HH\odot \HH \subseteq  N^+(\WPH),$$
   where $$N^+(\WPH)=\Big\{\frac{\varphi}{\psi}: \varphi\in \Mult(\WPH), \psi \in \Mult(\HH), \psi \text{ cyclic in }\HH \Big\}.$$
\end{theorem}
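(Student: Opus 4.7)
My plan is to reduce the statement to the scalar inclusion $\Mult(\HH)\subseteq\Mult(\HH\odot\HH)$ by producing a single cyclic $\psi\in\Mult(\HH)$ for which $\psi h\in\Mult(\HH\odot\HH)$. Given $h=\sum_{n} f_n g_n$ with $\sum_n\|f_n\|\,\|g_n\|<\infty$, I would first rescale each pair so that $\|f_n\|=\|g_n\|$; the sequences $(f_n)_n$ and $(g_n)_n$ then sit naturally inside the vector-valued Hilbert space $\HH\otimes\ell^2$, with square-summable norms equal to $\sum_n\|f_n\|\,\|g_n\|$.

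The crucial input I would invoke is a \emph{vector-valued Smirnov theorem} for complete Pick spaces: any element of $\HH\otimes\ell^2$ can be written as $(\varphi_n)_n/\psi_0$, where $(\varphi_n)_n$ is a bounded column multiplier $\HH\to\HH\otimes\ell^2$ and $\psi_0\in\Mult(\HH)$ is \emph{scalar} and cyclic in $\HH$; moreover a single such $\psi_0$ can be chosen to serve as denominator for any countable family. Applied to the union $\{f_n\}\cup\{g_n\}$, this would yield a cyclic $\psi_0\in\Mult(\HH)$ such that both $\Phi:=(\psi_0 f_n)_n$ and $\Gamma:=(\psi_0 g_n)_n$ are bounded column multipliers into $\HH\otimes\ell^2$.

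Setting $\psi:=\psi_0^2$, I observe that cyclicity of $\psi_0$ forces $M_{\psi_0}$ to have dense range (its range contains $\Mult(\HH)\psi_0$, which is dense in $\HH$), so $M_{\psi_0}^2=M_\psi$ also has dense range and $\psi$ is again cyclic in $\HH$. Writing $\psi h=\sum_n(\psi_0 f_n)(\psi_0 g_n)$ and applying it to an arbitrary $k\in\HH\odot\HH$, represented as $k=\sum_m u_m v_m$ with $\|u_m\|=\|v_m\|$, yields
\[
(\psi h)k=\sum_{n,m}(\psi_0 f_n u_m)(\psi_0 g_n v_m).
\]
Cauchy--Schwarz in the index $(n,m)$, together with the column-multiplier bounds $\sum_n\|\psi_0 f_n u_m\|_\HH^2\le\|\Phi\|^2\|u_m\|^2$ and $\sum_n\|\psi_0 g_n v_m\|_\HH^2\le\|\Gamma\|^2\|v_m\|^2$, would then give
\[
\|(\psi h)k\|_{\HH\odot\HH}\le\|\Phi\|\,\|\Gamma\|\,\Bigl(\sum_m\|u_m\|^2\Bigr)^{\!1/2}\Bigl(\sum_m\|v_m\|^2\Bigr)^{\!1/2}.
\]
Taking the infimum over such representations of $k$, this shows $\psi h\in\Mult(\HH\odot\HH)$, and the identity $h=(\psi h)/\psi$ exhibits $h$ as an element of $N^+(\HH\odot\HH)$.

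The principal obstacle is the vector-valued Smirnov theorem with a single \emph{scalar} cyclic denominator that simultaneously handles a countable family; this is precisely where the complete Pick property is essential, and it is the natural home for the column-to-row multiplier lemma advertised in the introduction (the lemma allowing one to pass between the two sides of the factorization in the vector-valued setting). Once that factorization is in hand, the remainder of the argument is just the rescaling trick and a double-sum Cauchy--Schwarz.
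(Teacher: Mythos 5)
Your proposal is correct and takes essentially the same route as the paper: the ``vector-valued Smirnov theorem'' you invoke is precisely Theorem 1.1 of \cite{AlHMcCr_Factors}, which the paper applies in Lemma \ref{WeakProductQuotientLemma} (the paper first reduces to sums of squares via the parallelogram law, whereas you factor the two families $\{f_n\}$ and $\{g_n\}$ simultaneously --- a cosmetic difference), and your double Cauchy--Schwarz estimate showing $\psi h\in\Mult(\WPH)$ is exactly the content of Theorem \ref{RowColumnMultiplier}. The cyclicity of the denominator, which you package into your black box, is obtained in the paper from the normalization $\psi_0=1-\psi$ with $\|\psi\|_{\Mult(\HH)}\le 1$ and $\psi(z_0)=0$ via Lemma 2.3 of \cite{AlHaMcCRiM}, together with the observation that products of cyclic multipliers are cyclic.
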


Beurling's theorem implies that the nonzero multiplier invariant subspaces of $H^2$ and $H^1$ are given by $\varphi H^2$ and $\varphi H^1$ for some inner function $\varphi$. It turns out that for the spaces $\HH$ under consideration a similarly close relationship exists between the multiplier invariant subspaces of $\HH$ and $\HH\odot \HH$. In \cite {LuoRi} and \cite{RiSunkes} it was shown that if $\HH=D$ or $\HH=H^2_d$, then for every multiplier invariant subspace $\HM$ of $\HH$ we have $\HM=\HH\cap\clos_{\HH\odot \HH} \HM$. In this paper we will refine this type of connection and we will see that it holds for a much wider class of complete Pick spaces.

One easily checks  that for any Hilbert function space the contractive inclusion $\Mult(\HH)\subseteq  \Mult(\HH\odot\HH)$ holds. For certain first order weighted Besov spaces (including the Dirichlet space of the unit disc and the Drury-Arveson space $H^2_d$ for $d \le 3$) it was shown in \cite{RiWi} that $\Mult(\HH)= \Mult(\HH\odot\HH)$, but we do not know whether  such an equality holds in a more general setting.

 \begin{theorem} \label{IntroSmirnov1b} Let $\HH$ be a separable Hilbert function space on the non-empty set $X$ such that  the reproducing kernel for $\HH$ is a normalized complete Pick kernel.

 Then
 $\HM\cap \Mult(\WPH)$ is dense in $\HM$ for every $\Mult(\WPH)$-invariant subspace $\HM$ of $\WPH$.
 \end{theorem}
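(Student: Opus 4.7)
The plan is to use Theorem \ref{IntroSmirnov1a} to factor each $h \in \HM$ as $\varphi/\psi$ with $\psi$ cyclic in $\HH$, and then to produce a sequence of multipliers of $\HH$ whose action on $h$ stays inside $\HM \cap \Mult(\WPH)$ while converging to $h$ in the weak product norm. Given $h \in \HM$, Theorem \ref{IntroSmirnov1a} yields $\varphi \in \Mult(\WPH)$ and $\psi \in \Mult(\HH)$, with $\psi$ cyclic in $\HH$, satisfying $\psi h = \varphi$. The contractive inclusion $\Mult(\HH) \subseteq \Mult(\WPH)$ together with the $\Mult(\WPH)$-invariance of $\HM$ then ensures that for every $p \in \Mult(\HH)$ the product $(p\psi) h = p\varphi$ lies in $\HM \cap \Mult(\WPH)$. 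It will therefore suffice to find $p_n \in \Mult(\HH)$ with $p_n \psi h \to h$ in $\WPH$.

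My next step would be to secure a bounded cyclic approximation of $1$ through $\psi$: multipliers $p_n \in \Mult(\HH)$ with $p_n \psi \to 1$ in $\HH$ and $\sup_n \|p_n \psi\|_{\Mult(\HH)} < \infty$. Cyclicity of $\psi$ supplies the first property; the uniform multiplier bound is the new ingredient, and this is where the complete Pick hypothesis must be used in an essential way, either via the construction underlying Theorem \ref{IntroSmirnov1a} or via the Smirnov class machinery of \cite{AlHaMcCRiM}. Granted this, together with the density of $\Mult(\HH)$ in $\HH$ (which follows from the cyclicity of $1$ in any normalized complete Pick space), a standard argument promotes $p_n \psi \to 1$ in $\HH$ to strong operator convergence $M_{p_n \psi} \to I$ on $\HH$: for $m \in \Mult(\HH)$ one has $M_{p_n\psi} m = m (p_n\psi) \to m$ in $\HH$, and the uniform norm bound plus density extend this to every $f \in \HH$.

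To finish, I would fix a representation $h = \sum_k f_k g_k$ with $\sum_k \|f_k\|_{\HH}\|g_k\|_{\HH} < \infty$ and bound
$$\|p_n \psi h - h\|_{\WPH} \le \sum_k \|p_n \psi f_k - f_k\|_{\HH}\|g_k\|_{\HH},$$
where the $k$-th term tends to $0$ as $n \to \infty$ by the previous step and is dominated by $(C+1)\|f_k\|_{\HH}\|g_k\|_{\HH}$, with $C$ the uniform multiplier bound. Dominated convergence in the index $k$ then yields $p_n \psi h \to h$ in $\WPH$, and the theorem follows because $p_n \psi h = p_n \varphi \in \HM \cap \Mult(\WPH)$. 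The bounded cyclic approximation in the second paragraph is the core difficulty; the weak product manipulations reduce to routine dominated convergence once it is in hand.
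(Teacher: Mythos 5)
Your overall strategy is the same as the paper's (factor $h$ as a multiplier divided by a cyclic multiplier, then regularize by a bounded family of multipliers of $\HH$), and your reduction to the ``bounded cyclic approximation'' is correct, as is the dominated-convergence endgame. But the step you yourself flag as the core difficulty is a genuine gap, and it cannot be filled from the black-box statement of Theorem \ref{IntroSmirnov1a}. Cyclicity of $\psi$ gives you some $p_n\in\Mult(\HH)$ with $p_n\psi\to 1$ in $\HH$, but for those $p_n$ there is no control whatsoever on $\|p_n\psi\|_{\Mult(\HH)}$; conversely, any candidates with a uniform multiplier bound do not obviously converge in norm. To get both properties you need the \emph{specific form} of the denominator, which is exactly what the paper's Lemma \ref{WeakProductQuotientLemma} supplies: $h=\varphi/(1-\psi)^2$ with $\|\psi\|_{\Mult(\HH)}\le 1$ and $\psi(z_0)=0$. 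For this denominator the explicit regularizers $p_r=(1-r\psi)^{-2}$ satisfy $\|p_r(1-\psi)^2\|_{\Mult(\HH)}\le 4$ and $p_r(z)(1-\psi(z))^2\to 1$ pointwise as $r\to 1^-$.

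Even then there is a second, smaller obstruction you should be aware of: pointwise convergence plus a uniform multiplier bound yields only \emph{weak} convergence $p_r(1-\psi)^2\to 1$ in $\HH$ (compare Lemma \ref{cyclic subspaces}(a), which claims only weak convergence of $g/(1-r\psi)$), not the norm convergence your dominated-convergence argument requires. The paper resolves this in Lemma \ref{weakProdInvSubspaceLemma} by a Mazur-type argument: since $1$ lies in the WOT-closure of the convex hull of the $p_r(1-\psi)^2$, convexity produces a sequence of convex combinations converging to $1$ in the strong operator topology of $\HH$, while the uniform multiplier bound is preserved; your final estimate then goes through verbatim for these convex combinations. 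So your proposal is repairable, but it needs (i) the sharper factorization $h=\varphi/(1-\psi)^2$ with contractive $\psi$ vanishing at $z_0$ in place of the generic $h=\varphi/\psi$, and (ii) the passage to convex combinations to upgrade weak to strong convergence.
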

 In particular, this implies that every non-zero multiplier invariant subspace of $\WPH$ contains a non-zero multiplier.
 It follows that if $\Mult(\HH \odot \HH)$ contains no zero-divisors (for instance if $\Mult(\HH \odot \HH)$ consists of analytic functions on a domain in $\mathbb C^d)$, then the lattice of multiplier invariant subspaces of $\WPH$ is cellularly indecomposable, i.e. whenever $\HM, \HN$ are such invariant subspaces with $\HM \ne (0), \HN \ne (0)$, then $\HM\cap \HN \ne (0)$.
 For $\HH=D$ (the Dirichlet space of the unit disc) this together with Proposition 3.6 of \cite{RichterDiss} provides a new proof of the theorem of Luo's that says that all nonzero multiplier invariant subspaces $\HM$ of $D\odot D$ have index 1, i.e. they satisfy dim $\HM \ominus z\HM=1$, \cite{Luo}. Our Theorem also shows that the same results hold in other weighted Dirichlet spaces of the unit disc.

 \

Under a technical hypothesis that is satisfied for many weighted Besov spaces in the unit ball of $\C^d$ we obtain a strengthened version of the previous theorems.
Every sequence $\Phi=\{\varphi_1, \varphi_2,...\}\subseteq  \Mult(\HH)$ of multipliers of a Hilbert function space $\HH$ can be used to define a column operator  $\Phi^C: h\to (\varphi_1h, \varphi_2 h,...)^T$ and a row operator $\Phi^R:(h_1,h_2,...)^T \to \sum_{i\ge 1}\varphi_ih_i$. Here we have used $(h_1,...)^T$ to denote a transpose of a row vector. We write $M^C(\HH)$  for the set of bounded column multiplication operators $\HH \to \bigoplus_{n=1}^\infty \HH$ and $M^R(\HH)$ for the set of bounded row multiplication operators $\bigoplus_{n=1}^\infty \HH \to \HH$.
The useful technical hypothesis is the condition $M^C(\HH) \subseteq M^R(\HH)$, that is, every bounded
column multiplication operator on $\HH$ is also a bounded row multiplication operator. By the closed graph
theorem, such an inclusion is automatically continuous.

\begin{theorem} \label{IntroSmirnov2}
Let $\HH$ be a separable Hilbert function space on the non-empty set $X$ such that  the reproducing kernel for $\HH$ is a normalized complete Pick kernel.

If  $M^C(\HH)\subseteq  M^R(\HH)$ continuously, then
$$\HH\odot \HH \subseteq  N^+(\HH).$$
Furthermore,
 every $\Mult(\HH)$-invariant subspace of $\HH\odot \HH$ is $\Mult(\HH\odot \HH)$-invariant, and the map \begin{align*}&\eta:\HN \to \clos_{\WPH} \HN\end{align*}  establishes a  1-1 and onto correspondence between the multiplier invariant subspaces of $\HH$ and of $\WPH$. We have
\begin{enumerate}
\item $\HM=\clos_{\WPH}(\HM\cap \Mult(\HH))$ for every multiplier invariant subspace $\HM$ of $\WPH$, and
\item $\HN=\HH \cap \clos_{\WPH} \HN$ for every multiplier invariant subspace $\HN$ of $\HH$.
\end{enumerate}
\end{theorem}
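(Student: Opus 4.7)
The plan is to establish, in order, the Smirnov containment $\WPH\subseteq N^+(\HH)$, then the invariance upgrade ($\Mult(\HH)$-invariance implies $\Mult(\WPH)$-invariance), and finally the two closure identities (i) and (ii), after which the bijection follows formally.

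The Smirnov containment is the direct application of the hypothesis. Given $h=\sum_{i\ge 1}f_ig_i\in\WPH$ with $\sum\|f_i\|\,\|g_i\|<\infty$, I renormalize so that $F=(f_i)$ and $G=(g_i)$ lie in $\bigoplus_{n=1}^\infty\HH$. The complete Pick realization produces cyclic multipliers $\psi,\mu\in\Mult(\HH)$ and column multiplication operators $A,B\in M^C(\HH)$ with $A\cdot 1=\psi F$ and $B\cdot 1=\mu G$; that is, $a_i:=\psi f_i$ and $b_i:=\mu g_i$ lie in $\Mult(\HH)$ and assemble into bounded column multipliers. The hypothesis $M^C(\HH)\subseteq M^R(\HH)$ converts $B$ into a bounded row multiplication operator, so the composition $B\circ A\colon\HH\to\HH$ is bounded; a direct computation identifies it with $M_{\sum a_ib_i}=M_{\psi\mu h}$, so $\psi\mu h\in\Mult(\HH)$. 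Since $\psi\mu$ is cyclic, $h\in N^+(\HH)$.

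Everything else rests on one common approximation lemma, which I expect to be the main obstacle: if $\rho\in\Mult(\HH)$ is cyclic in $\HH$ and $k\in\WPH$ satisfies $\rho k\in\Mult(\HH)$, then for a suitable sequence $\lambda_n\in\Mult(\HH)$ with $\lambda_n\rho\to 1$ in $\HH$ one has $\lambda_n(\rho k)\to k$ in $\WPH$. The subtlety is that the cyclicity is given in the stronger $\HH$-norm whereas one needs $\WPH$-norm convergence, and the identity $\lambda_n(\rho k)-k=(\lambda_n\rho-1)k$ multiplies an $\HH$-function by a $\WPH$-function, so the naive estimate does not close. I would handle this by choosing the $\lambda_n$ with uniformly bounded $\Mult(\HH)$-norm---a step where one again appeals to the complete Pick structure and, if necessary, the column--row hypothesis---and then upgrading pointwise convergence on a dense subset of $\WPH$ (such as the quotients of multipliers produced by the Smirnov step) to $\WPH$-norm convergence via the uniform bound on $\{M_{\lambda_n\rho}\}$.

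With this lemma in hand the remaining pieces are routine. Claim (i) is immediate: for $h\in\HM$ with $\HM$ a $\Mult(\HH)$-invariant subspace of $\WPH$, write $h=\eta/\psi$ so that $\eta=\psi h\in\HM\cap\Mult(\HH)$, and apply the lemma with $\rho=\psi$, $k=h$ to approximate $h$ in $\WPH$ by $\lambda_n\eta\in\HM\cap\Mult(\HH)$. For the invariance upgrade, given $\varphi\in\Mult(\WPH)$ and $h\in\HM$, apply the Smirnov containment to $\varphi\eta\in\WPH$ to get $\varphi\eta=\alpha/\beta$; a double application of the lemma (to the cyclic factors $\beta$ and $\psi$) expresses $\varphi h=(\varphi\eta)/\psi$ as a $\WPH$-limit of products $\mu_m\eta$ with $\mu_m\in\Mult(\HH)$, placing $\varphi h\in\HM$ by $\Mult(\HH)$-invariance. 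For (ii), given $f\in\HH\cap\clos_{\WPH}\HN$, set $\eta=\psi f\in\Mult(\HH)\cap\clos_{\WPH}\HN$; the invariance upgrade together with (i) shows $\eta$ lies in the $\WPH$-closure of $\HN\cap\Mult(\HH)$, and an $\HH$-projection plus weak-compactness argument---tractable because $\eta$ is a multiplier, so $\HH$-boundedness of the projected approximants is available---yields $\eta\in\HN$. Finally, because $\eta\in\Mult(\HH)$, multiplication by $\eta$ is bounded on $\HH$, so $\lambda_n\eta\to f$ in $\HH$ whenever $\lambda_n\psi\to 1$ in $\HH$, whence $f\in\HN$. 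The bijection and its inverse $\HM\mapsto\HH\cap\HM$ follow formally from (i), (ii), and the invariance upgrade.
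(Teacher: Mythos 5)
Your Smirnov-class argument is correct and close to the paper's: the paper first symmetrizes $h$ into a sum of squares via the parallelogram law and factors a single sequence with one denominator, whereas you factor the two sequences $(f_i)$ and $(g_i)$ separately; both reduce to the same row-times-column composition $B^R A^C=M_{\psi\mu h}$. Your ``common approximation lemma'' is also the right engine for statement (i), but you should state it only for denominators of the special form $1-\psi$ with $\|\psi\|_{\Mult(\HH)}\le 1$ and $\psi(z_0)=0$ that the factorization theorem actually produces: for those, the explicit regularizations $\lambda_r=1/(1-r\psi)$ satisfy $\|\lambda_r(1-\psi)\|_{\Mult(\HH)}\le 2$ and converge pointwise to $1$, and a convex-combination/WOT argument upgrades this to $\WPH$-norm convergence on a fixed element. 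For a general cyclic multiplier $\rho$ there is no reason uniformly bounded approximate inverses exist, so the lemma as you state it is not available.

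Two later steps do not close as written. In the invariance upgrade you factor $\varphi\eta=\alpha/\beta$ and claim the approximants $\lambda_m\alpha$ have the form $\mu_m\eta$ with $\mu_m\in\Mult(\HH)$; but $\lambda_m\alpha=(\lambda_m\beta\varphi)\eta$ and $\lambda_m\beta\varphi$ is only known to lie in $\Mult(\WPH)$, which is precisely the invariance being proved. The fix (and the paper's route) is to factor $\varphi$ itself, which lies in $\WPH$ because $1\in\HH$: write $\varphi=\alpha/(1-\psi')^{2}$ with $\alpha\in\Mult(\HH)$, so $\varphi h=\alpha h/(1-\psi')^{2}$ with $\alpha h\in\HM$, and the approximation lemma applies directly. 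More seriously, your proof of (ii) is circular: applying (i) to $\HM=\clos_{\WPH}\HN$ yields approximants in $\clos_{\WPH}\HN\cap\Mult(\HH)$, not in $\HN\cap\Mult(\HH)$, and passing from the former to the latter is exactly statement (ii). The proposed ``projection plus weak-compactness'' repair does not help: $\WPH$-convergence of the approximants gives no $\HH$-bound on them (that $\eta$ is a multiplier controls $M_\eta$, not the approximating sequence), and testing against $\HN^{\perp}$ would require knowing that $\HN^{\perp}$ is spanned by functionals continuous on $\WPH$, which in the paper is deduced \emph{from} (ii). The missing idea is a simultaneous factorization: pass to a subsequence with $\sum_{n}\|f_{n+1}-f_n\|_{\WPH}<1$, factor all differences at once to obtain a common denominator $(1-\psi)^{2}$ and numerators $\varphi_n=(1-\psi)^{2}(f_{n+1}-f_n)\in\HN$ whose partial sums are uniformly bounded in $\Mult(\HH)$ (this is where $M^C(\HH)\subseteq M^R(\HH)$ is used again); since $1\in\HH$, the partial sums converge weakly in $\HH$ to $(1-\psi)^{2}(f-f_1)$, which therefore lies in $\HN$, and two applications of the $\HH$-level division lemma give $f\in\HN$.
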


 Trent showed that for the Dirichlet space $D$ of the unit disc $\D\subseteq  \C$ one has the continuous inclusion $M^C(D)\subseteq  M^R(D)$, but that $M^R(D) \nsubseteq M^C(D)$. In fact, he showed that for the Dirichlet space the norm of the inclusion is at most $\sqrt{18}$, see Lemma 1 of \cite{TrentCorona}. We will establish a generalization of Trent's Theorem to many weighted Besov spaces in the unit ball $\Bd$ of $\C^d$.

 A non-negative integrable function $\om$ on $\Bd$ is called an admissible weight, if  the weighted Bergman space $L^2_a(\om)=L^2(\om dV)\cap \Hol(\Bd)$ is closed in $L^2(\om dV)$, and if point evaluations $f\to f(z)$ are bounded on $L^2_a(\om)$ for each $z\in \Bd$.  Here $V$ is used to denote Lebesgue measure on $\C^d$ restricted to $\Bd$, normalized so that $V(\Bd)=1$. Radial weights are non-negative integrable functions such that for each $0\le r<1$  the value $\om(rz)$ is independent of $z\in \dB$, and one easily checks that a radial weight is admissible, if and only if $\int_{|z|>t} \om dV>0$ for each $t\in [0,1)$.   If $\om$ is radial, then we have $$\|f\|^2_{L^2_a(\om)}=\int_{\Bd}|f|^2 \om dV= \sum_{n\ge 0} \|f_n\|^2_{L^2_a(\om)},$$ where $f=\sum_{n\ge 0} f_n$ is the decomposition of the analytic function $f$ into a sum of homogeneous polynomials $f_n$ of degree $n$.

Let $R=\sum_{i=1}^d z_i\frac{\partial}{\partial z_i}$ denote the radial derivative operator, then $Rf= \sum_{n\ge 1} nf_n$. More generally, for each nonzero $t\in \R$ we may consider the "fractional" transformation $R^t:\sum_{n\ge 0}  f_n \to \sum_{n\ge 1} n^t f_n$.

 For a positive integer $N$ and an admissible weight $\om$ we define
\begin{align*}B^N_\om&=\{f\in \Hol(\Bd): R^N f \in L^2_a(\om)\},\\ \|f\|^2_{B^N_\om}&=\|\om\|_{L^1(V)}|f(0)|^2 + \int_{\Bd}|R^N f|^2\om dV.\end{align*}
 We also write $B^0_\om=L^2_a(\om)$.  We will say that $\HH$ is a weighted Besov space, if $\HH$ is a space of analytic functions on $\Bd$ such that there is an admissible weight $\om$ and a non-negative integer $N$ satisfying
  $\HH=B^N_\om$ with equivalence of norms.

 If $\om$ is an admissible radial weight, then  the spaces $B^N_\om$ are part of a one-parameter family of spaces defined for $s\in \R$ by
 \begin{align}\label{BesovDefinition} \|f\|^2_{B^s_\om}= \|f_0\|^2_{L^2_a(\om)}+ \sum_{n\ge 1} n^{2s}\|f_n\|^2_{L^2_a(\om)}<\infty,\end{align}
 where as above  $f=\sum_{n\ge 0} f_n\in \Hol(\Bd)$.

If $\om(z)=1$, $s\in \R$, and $f\in \Hol(\Bd)$,  then $f\in B^s_\om$ if and only if $R^sf\in L^2_a$, the unweighted Bergman space. Thus, in this case the collection $B^s_\om$ consists of standard weighted Bergman or Besov spaces. We have $B^{d/2}_{{\mathbf 1}}=H^2_d$, the Drury-Arveson space, $B^{1/2}_{{\mathbf 1}}=H^2(\dB)$, the Hardy space of the Ball, and for $s<1/2$ we obtain the weighted Bergman spaces $B^s_{\mathbf 1}= L^2_a((1-|z|^2)^{-2s}dV)$, where all equalities are understood to mean equality of spaces with equivalence of norms. These spaces have been extensively studied in the literature. We refer the reader to \cite{ZhaoZhu}, where the $L^p$-analogues of these spaces were considered as well. If $d=1$ and $s=1$, then $B^1_{\mathbf 1}=D$, the classical Dirichlet space of the unit disc. More generally, if $d=1$ and $s>1/2$, then these spaces are usually referred to as Dirichlet-type spaces, see \cite{BrownShields}.

If $\om_\alpha(z)=(1-|z|^2)^{\alpha}$ for some $\alpha >-1$, then $\om_\alpha$ is called a standard weight, and we obtain the same spaces as for $\om=1$, but with a shift in indices: $B^s_{\om_\alpha}= B^{s-\frac{\alpha}{2}}_{\mathbf 1}$. This can be verified by using polar coordinates in (\ref{BesovDefinition}) and the asymptotics $\int_0^1 t^n(1-t)^\alpha dt = \frac{\Gamma(n+1)\Gamma(\alpha+1)}{\Gamma(n+\alpha+2)}\approx n^{-\alpha-1}$, which follows e.g. from Stirling's formula. In particular, it follows that  $B^{s}_{\mathbf 1}$ is a weighted Besov space for any $s\in \R$.

\begin{definition}\label{IntroMultInclCond} We say that a weighted Besov space $\HH$ satisfies the multiplier inclusion condition, if there is an admissible weight $\om$ on $\Bd$ and a non-negative integer $N$ with $\HH=B^N_\om$ and  $$M^C(B^N_{\om})\subseteq  M^C(B^{N-1}_{\om})\subseteq  \cdots \subseteq  M^C(B^0_{\om})$$ with continuous inclusions.
\end{definition}

   \begin{theorem}\label{IntroRowColumnOp}  Let $\HH$ be a weighted Besov space that satisfies the multiplier inclusion condition.

   Then $M^C(\HH)\subseteq  M^R(\HH)$ and  there is a $c>0$ such that $$\|\Phi^R\|_{\HH}\le c\|\Phi^C\|_{\HH}$$ for all $\Phi\in   M^C(\HH)$.
  \end{theorem}

It is known and easy to verify that $M^C(L^2_a(\om))=M^R(L^2_a(\om))=H^\infty(\ell_2)$, where
$$H^\infty(\ell_2)=\Big\{(\varphi_1,\varphi_2,...): \varphi_j\in H^\infty \text{ and } \sup_{z\in \Bd} \sum_j |\varphi_j(z)|^2 < \infty\Big\}.$$ Similarly, it is a standard fact that for each $n\in \N$  one has $M^C(B^n_\om)\subseteq  H^\infty(\ell_2)$. Thus for $N=1$ every weighted Besov space satisfies the multiplier inclusion condition, and hence $M^C(B^1_\om)\subseteq  M^R(B^1_\om)$ holds for all admissible weights.

In Section \ref{SectionWeightedBesov} we will provide a short and elementary proof that shows that every Hilbert space of analytic functions in $\Bd$ whose reproducing kernel is of the type $k_w(z)= (1-\la z,w\ra)^{\alpha}$, $\alpha<0$, satisfies the multiplier inclusion condition. This includes the Drury-Arveson space.

  A second approach to the multiplier inclusion condition is via complex interpolation. Indeed, if the spaces $\{B^n_\om\}$ and $\{B^n_{{\om}}\otimes \ell_2\}$, $n=0, 1, ... ,N$,  are part of  interpolation scales   $\{B^s_\om\}$ and $\{B^s_{{\om}}\otimes \ell_2\}$, $0 \le s\le N, s\in \R$, obtained by the complex method, then the functorial property of the interpolation implies that the hypothesis of Theorem \ref{IntroRowColumnOp} reduces to $M^C(B^N_{\om})\subseteq  M^C(B^0_{\om})=H^\infty(\ell_2)$. See \cite{BerghLoefstroem} for information about the complex method.

Thus, our theorem implies that if the spaces $\{B^N_\om\}_{N\in \N_0}$ and $\{B^N_\om\otimes \ell_2\}_{N\in \N_0}$ are part of  interpolation scales obtained by the complex method, then every bounded column multiplication operator on $B^N_\om$ induces a bounded row operator.

 For standard weights and more generally for weights that satisfy a Bekoll\'{e}-Bonami condition it was shown in \cite{CasFabOrt} and \cite{CasFab_Bekolleweights} that $\{B^s_\om\}_{s\in\R}$ is an interpolation scale, and that the spaces satisfy the scalar version of the multiplier inclusion condition (for standard weights also see \cite{ZhaoZhu}). The full column operator multiplier inclusion condition follows similarly in those cases. In \cite{AlHaMcCRiWeightedB} we similarly show that in fact for every admissible radial measure and every $s\in \R$ the space $B^s_\om$ satisfies the multiplier inclusion condition and the conclusion of Theorem \ref{IntroRowColumnOp} holds. In that paper we also show that if a radial measure $\om$ satisfies that for some $\alpha >-1$ the ratio $\om(z)/(1-|z|^2)^\alpha$ is nondecreasing in $|z|$ for $t_0<|z|<1$, then
 $B^s_\om$ is a complete Pick space, whenever $s\ge (\alpha+d)/2$. By a complete Pick space we mean a Hilbert function space $\HH$ such that there is a norm on $\HH$ that is equivalent to the original one, and such that the reproducing kernel for  one of the norms is a normalized complete Pick kernel.

 As another application of Theorem \ref{IntroRowColumnOp} we mention  that it provides a new proof of  the main result of \cite{AlHaMcCRiInter} in the case where the complete Pick space is a radially weighted Besov space. Indeed, in \cite{AlHaMcCRiInter} the proof of the characterization of the interpolating sequences for all spaces with complete Pick kernel is based on the Marcus-Spielman-Srivastava theorem \cite{MarcusSpielmanSrivastava}, but in Remark 3.7 and Theorem 3.8 of \cite{AlHaMcCRiInter} it is explained how an application of Theorem \ref{IntroRowColumnOp}  provides an alternate proof. We particularly point out that this approach provides a very direct proof for the case of  the Drury-Arveson space on a finite dimensional ball. For this only the results of Section 4 are needed.

\

This paper is organized as follows. In Section \ref{BackgroundWPH} we prove that the weak product always carries a weak* topology such that point evaluations are weak*-continuous, and we review the connection between $(\WPH)^*$ and Hankel operators.  Corollary \ref{Smirnov}  contains Theorem \ref{IntroSmirnov1a}  and the Smirnov class inclusions of Theorem \ref{IntroSmirnov2}, while in Theorem \ref{WeakProductQuotient} we have provided a technical version which gives more information. In Section \ref{SectionMultInvsubspaces} we have proved   Theorem \ref{IntroSmirnov1b} and the parts of Theorem \ref{IntroSmirnov2} that give information about the invariant subspaces. Corollary \ref{HankelIntersection} states that if a complete Pick space $\HH$ satisfies the condition $M^C(\HH)\subseteq  M^R(\HH)$, then all multiplier invariant subspaces of $\HH$ are equal to a countable intersection of null spaces of bounded Hankel operators. This extends results of \cite{LuoRi} and \cite{RiSunkes}. Section 4 is independent of the results of Sections 2 and 3, it contains our results on weighted Besov spaces. Theorem \ref{IntroRowColumnOp} will be a special case of Theorem \ref{RowColumnOp}, where operators between possibly different spaces are considered. In \cite{TrentCorona} Trent has provided an example that shows that $M^R(D) \nsubseteq M^C(D)$.
Since Trent's example  does not immediately generalize from the Dirichlet space to the Drury-Arveson space, we have provided an example of a bounded row multiplication operator on $H^2_d, d>1,$ that does not induce a bounded column operator, see Section 4.2.

\section{Background on the weak product of a Hilbert function space}
\label{BackgroundWPH}
In \cite{RiSuWeakProd} some general results about the weak product $\WPH$ and its dual were shown for the case when $\HH$ is a Hilbert space of analytic functions. In particular, it was shown that $\WPH$ is always a Banach function space and that its dual can be identified with a space of Hankel operators, provided the space $\HH$ satisfied a certain extra hypothesis. This makes it reasonable to conjecture that $\WPH$ has an isometric predual which can be identified with a space of compact Hankel operators. We will now show that indeed for any Hilbert function space $\HH$ the weak product has an isometric predual and if $\Mult(\HH)$ is densely contained in $\HH$, then the dual and predual of $\WPH$ can each be identified with Hankel operators defined by  symbol sets that are contained in $\HH$.

 \subsection{The predual of $\WPH$}

\begin{theorem}\label{predual} Let $\HH$ be a Hilbert function space on a set $X$. Then the weak product $\HH\odot\HH$ is a Banach function space on $X$.
It is isometrically isomorphic to the dual of a Banach space in such a way that point evaluations on $\HH\odot \HH$ are weak* continuous with respect to the duality.
\end{theorem}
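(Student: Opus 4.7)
The plan is to realise $\WPH$ as a quotient of the projective tensor product $T:=\HH\hat\otimes_\pi\HH$ and to transfer a predual structure from $T$ to the quotient. Define the multiplication map $\mu:T\to\C^X$ by continuous linear extension of $\mu(f\otimes g)=fg$. The estimate $\sum_i |f_i(x)g_i(x)|\le \|k_x\|^2 \sum_i\|f_i\|\|g_i\|$ shows that $\mu$ is well defined, with $\operatorname{ran}\mu = \HH\odot\HH$ and quotient norm equal to $\|\cdot\|_{\HH\odot\HH}$. Because $\ker\mu=\bigcap_{x\in X}\ker(\operatorname{ev}_x\circ\mu)$ is an intersection of kernels of bounded linear functionals, it is closed, so $T/\ker\mu$ is a Banach space; together with the pointwise bound $|h(x)|\le \|k_x\|^2\|h\|_{\HH\odot\HH}$ this yields the first assertion, that $\WPH$ is a Banach function space.

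For the predual I would invoke the standard fact that the projective tensor product of a Hilbert space with itself is isometrically isomorphic to the trace class $\tc$ of nuclear operators (after an appropriate complex-conjugation identification), which in turn is canonically the dual of the space $\mathcal K$ of compact operators under the trace pairing. In particular $T$ is a dual Banach space. The evaluation functional $F_x:=\operatorname{ev}_x\circ\mu$ corresponds, under the identification of $T^*$ with bounded bilinear forms on $\HH\times\HH$, to the rank-one form $(f,g)\mapsto \langle f,k_x\rangle\langle g,k_x\rangle$; rank-one bilinear forms are implemented by (rank-one, hence) compact operators, so $F_x$ is weak*-continuous on $T$. Consequently $\ker\mu$, being the intersection of kernels of weak*-continuous functionals, is weak*-closed in $T$.

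To finish I would appeal to the standard duality statement: if $V$ is a weak*-closed subspace of a dual Banach space $E^*$, then the quotient $E^*/V$ is canonically and isometrically identified with $(V_\perp)^*$, where $V_\perp\subseteq E$ is the preannihilator, and the weak* topology on $E^*/V$ is exactly the one induced from this identification. Applied to $V=\ker\mu$ this yields $\WPH \cong ((\ker\mu)_\perp)^*$, with $(\ker\mu)_\perp\subseteq\mathcal K$. Under the resulting weak* topology a linear functional on $\WPH$ is weak*-continuous exactly when its pullback to $T$ is weak*-continuous on $T$ and vanishes on $\ker\mu$; both conditions are satisfied by each point evaluation $\operatorname{ev}_x$ (by construction and by the preceding paragraph), so point evaluations on $\WPH$ are weak*-continuous with respect to this duality.

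The main obstacle is bookkeeping rather than depth: one must fix conjugation conventions so that the identification $\HH\hat\otimes_\pi\HH\cong\tc$ and the trace duality $\tc\cong\mathcal K^*$ align correctly, and then confirm that the weak* topology pulled through a weak*-closed quotient delivers the intended notion of continuity on the weak product. These are standard manipulations in tensor-product and Banach-space duality theory, and they use nothing about $\HH$ beyond the boundedness of point evaluations.
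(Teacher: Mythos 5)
Your proposal is correct and follows essentially the same route as the paper's proof: realise $\WPH$ as the quotient of the projective tensor product $\HH\otimes_\pi\HH$ by the weak*-closed kernel of the multiplication map, identify the tensor product with trace-class operators dual to the compacts, and observe that the point evaluations correspond to rank-one (hence compact) operators, so they lie in the predual and are weak*-continuous on the quotient. The only cosmetic difference is that the paper makes the conjugation bookkeeping explicit by working with $\mathcal C_1(\overline{\HH},\HH)$, where $\overline{\HH}$ is the space of complex conjugates identified with $\HH^*$.
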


\begin{proof}
  Let $\HH \otimes_{\pi} \HH$ denote the Banach space projective tensor product
  of $\HH$ with itself, that is, the completion of the algebraic tensor product with
  respect to the norm
  \begin{equation*}
    ||h|| = \inf \Big \{ \sum_{n=1}^n ||f_n|| \, ||g_n|| : h = \sum_{n=1}^n f_n\otimes g_n \Big \}.
  \end{equation*}
  Every element $u$ of $\HH \otimes_{\pi} \HH$ can in fact be written in the form
  \begin{equation*}
    u = \sum_{n=1}^\infty f_n \otimes g_n \quad \text{ with } \sum_{n=1}^\infty ||f_n|| \, ||g_n|| < \infty,
  \end{equation*}
  see e.g. \cite{RyanBanachTensor}. In the following we will use the Hilbert space of complex conjugates
  $$\overline{\HH}=\{\overline{f}: f\in \HH\},$$ which has inner product given by $\la \overline{f}, \overline{g}\ra=\la g,f\ra$ and can be isometrically identified with the dual $\HH^*$ via the correspondence $\overline{f} \to L_{\overline{f}}$, $L_{\overline{f}}(g)=\la g,f\ra_{\HH}$ for $g\in \HH$.

  By definition of the weak product, the map
  \begin{equation*}
    \rho: \HH \otimes_{\pi} \HH \to \HH \odot \HH, \quad \rho \Big( \sum_{n=1}^\infty f_n \otimes g_n  \Big)
    (z) = \sum_{n=1}^\infty f_n(z) g_n(z)
  \end{equation*}
  is a quotient map. For $z \in X$, let $E_z=\overline{k}_z\otimes \overline{k}_z \in (\HH \otimes_{\pi} \HH)^*$ denote the functional of evaluation at $z$. Then
  \begin{equation*}
    \ker \rho = \bigcap_{z \in X} \ker E_z,
  \end{equation*}
 thus  $\rho$ induces an isometric isomorphism $(\HH \otimes_{\pi} \HH) / \ker\rho \cong \HH \odot \HH$.

  Let $\mathcal C_1(\overline{\HH}, \HH)$ denote the space of all trace class operators from $\overline{\HH}$ to $\HH$.
  Then $\HH \otimes_{\pi} \HH$ can be isometrically identified with $\mathcal C_1(\overline{\HH},\HH)$ via the map
  \begin{equation*}
    \Phi: \HH \otimes_{\pi} \HH \to \mathcal C_1(\overline{\HH},\HH), \quad \Phi(f \otimes g)(\overline{h}) = \la g,h\ra f,
  \end{equation*}
  see \cite{EffrosRuan}, Proposition 8.2.1.
  On the other hand, $\mathcal C_1(\overline{\HH},\HH)$ is the dual space of the space of compact operators
  from $\HH$ to $\overline{\HH}$ via trace duality. Thus, $\HH \otimes_{\pi} \HH$ becomes a dual space in this way,
  and every functional of the form $\overline{f} \otimes \overline{g}$ on $\HH \otimes_{\pi} \HH$ for $\overline{f},\overline{g} \in \overline{\HH}$
  is weak* continuous. In particular, it follows that $\ker\rho$ is weak* closed and
   thus $\HH \odot \HH$ can be identified with the dual of $^\bot \ker \rho$. Since $\overline{k}_z \otimes \overline{k}_z$
  belongs to $^\bot \ker\rho$ for each $z$, point evaluations on $\HH \odot \HH$ are weak* continuous
  with respect to this duality.
\end{proof}

It follows from the Hahn-Banach theorem that the linear span of the point evaluations is dense in the predual of $\HH \odot \HH$.
Using the uniform boundedness principle, we therefore obtain the following standard corollary.
\begin{corollary} \label{weak*convergence} Let $\HH$ be a  Hilbert function space  on a set $X $, and let $h_n\in \HH\odot \HH$ be a sequence of functions. Then the following are equivalent for a function $h$ on $X$:

 (a) $h\in \HH\odot \HH$ and $h_n\to h$ in the weak* topology given by the previous theorem,

 (b) $\|h_n\|_{\HH\odot \HH} \le C$ and $h_n(z) \to h(z)$ for all $z\in X$.
\end{corollary}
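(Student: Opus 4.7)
The plan is to derive the corollary from the weak$^*$ representation established in Theorem \ref{predual} by two standard soft-analysis arguments, exactly as the paragraph preceding the corollary anticipates. The ingredients I will rely on are: the uniform boundedness principle, the Banach--Alaoglu theorem, the weak$^*$ continuity of each point evaluation $E_z$, and the density (via Hahn--Banach) of the linear span of $\{E_z : z\in X\}$ in the predual of $\HH\odot\HH$.

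For the implication (a)$\Rightarrow$(b): a weak$^*$ convergent sequence in a dual Banach space is norm bounded by the uniform boundedness principle, applied to $\{h_n\}$ viewed as a sequence of functionals on $^{\perp}\!\ker \rho$. Since Theorem \ref{predual} produces each $E_z$ as a weak$^*$ continuous functional, pointwise convergence $h_n(z) = E_z(h_n)\to E_z(h)=h(z)$ is immediate.

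For the implication (b)$\Rightarrow$(a) I would proceed in two steps. \emph{Step one: showing $h\in \HH\odot\HH$.} The norm-bounded sequence $\{h_n\}$ lies in the weak$^*$ compact ball $\{g\in \HH\odot\HH : \|g\|_{\HH\odot\HH}\le C\}$ by Banach--Alaoglu, so it has a weak$^*$ cluster point $h'\in \HH\odot\HH$. By weak$^*$ continuity of point evaluations, $h'(z) = \lim_{\alpha} h_{n_\alpha}(z) = h(z)$ for every $z\in X$ along the relevant subnet, and since $\HH\odot\HH$ is a function space we conclude $h=h'\in \HH\odot\HH$. \emph{Step two: upgrading to weak$^*$ convergence of the full sequence.} Fix $\ell$ in the predual and $\varepsilon>0$. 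Using that the linear span of $\{E_z : z\in X\}$ is dense in the predual, choose a finite combination $\ell_0 = \sum_{j=1}^{m} c_j E_{z_j}$ with $\|\ell-\ell_0\|< \varepsilon / (2C + 2\|h\|_{\HH\odot\HH})$. The triangle inequality gives
\begin{equation*}
|\ell(h_n - h)| \le \|\ell - \ell_0\|\,(\|h_n\|+\|h\|) + \Big|\sum_{j=1}^{m} c_j\bigl(h_n(z_j)-h(z_j)\bigr)\Big|,
\end{equation*}
and since the first term is at most $\varepsilon$ and the second tends to $0$ as $n\to\infty$ by the assumed pointwise convergence at the finitely many points $z_1,\dots,z_m$, we obtain $\ell(h_n)\to \ell(h)$.

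There is no real obstacle here; the only thing one must be careful about is producing the weak$^*$ cluster point in Step one without assuming separability of the predual, which is why I would phrase Step one in terms of a subnet via Banach--Alaoglu rather than using sequential weak$^*$ compactness. Once $h$ is known to lie in $\HH\odot\HH$, Step two is a purely formal density-plus-boundedness argument that does not rely on metrizability.
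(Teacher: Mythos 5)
Your proof is correct and follows essentially the same route the paper intends: the paper derives the corollary from exactly the two ingredients you use, namely the Hahn--Banach density of the span of the point evaluations in the predual $^{\bot}\ker\rho$ together with the uniform boundedness principle, leaving the standard details (which you supply, including the Banach--Alaoglu cluster-point step to place $h$ in $\HH\odot\HH$ and the $\varepsilon$-argument to upgrade to weak$^*$ convergence of the whole sequence) to the reader.
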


We further remark that if $\HH$ is separable, then so are $\WPH$ and its predual. It follows that in this case the closed unit ball of $\WPH$ is compact metrizable in the weak* topology.

 \subsection{The Connection to Hankel operators}\label{SecHankel}
Since the dual space of $\mathcal C_1(\overline{\HH},\HH)$ is the space $\mathcal B(\HH,\overline{\HH})$ via trace duality and since $\HH \otimes_{\pi} \HH \cong \mathcal C_1(\overline{\HH},\HH)$, every $T\in \mathcal B(\HH,\overline{\HH})$ defines a linear functional on $\HH \otimes_{\pi} \HH$ by \begin{equation}\label{functional} f\otimes g \to \la g, \overline{Tf}\ra.\end{equation}
 Let
 \begin{equation*}
   \rho: \HH \otimes_{\pi} \HH \to \HH \odot \HH
 \end{equation*}
 be the quotient map from the proof of Theorem \ref{predual}. Then
\begin{equation*}
  (\HH \odot \HH)^* \cong (\ker \rho)^\bot \subseteq  (\HH \otimes_{\pi} \HH)^* \cong \mathcal B(\HH, \overline{\HH}).
\end{equation*}
We will now see that if the multipliers are dense in $\HH$, then the operators in $(\ker \rho)^\bot$ can be considered to be little Hankel operators, each of which is identified with a symbol from the space ${\HH}$.

\begin{lemma}\label{HankelLemma} If $T\in (\ker \rho)^\bot$, then

(a) $T^* \overline{f}=\overline{Tf}$ for every $f\in \HH$, and

(b) $TM_\varphi= M^*_{\overline{\varphi}}T$ for every $\varphi \in \Mult(\HH)$.

Furthermore, if $\Mult(\HH)$ is densely contained in $\HH$, then for $T\in (\ker \rho)^\bot$ we have $T=0$ if and only if $T1=0$.
\end{lemma}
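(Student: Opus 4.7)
The plan is to extract both identities from a handful of obvious elements of $\ker\rho$. Since pointwise multiplication is commutative and associative, $f\otimes g-g\otimes f\in\ker\rho$ for all $f,g\in\HH$, and $(\varphi f)\otimes h-f\otimes(\varphi h)\in\ker\rho$ for all $f,h\in\HH$ and $\varphi\in\Mult(\HH)$. Any $T\in(\ker\rho)^\bot$ therefore satisfies the symmetry
$\langle g,\overline{Tf}\rangle_\HH=\langle f,\overline{Tg}\rangle_\HH$
and the commutation
$\langle h,\overline{T(\varphi f)}\rangle_\HH=\langle \varphi h,\overline{Tf}\rangle_\HH$,
obtained by pairing the functional \eqref{functional} with the two kinds of null elements above.

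To derive (a), I would combine the symmetry with the definition of the Hilbert space adjoint $T^*:\overline\HH\to\HH$. Unwinding the conjugate inner product gives $\langle f,T^*\overline h\rangle_\HH=\langle Tf,\overline h\rangle_{\overline\HH}=\langle h,\overline{Tf}\rangle_\HH$, and the symmetry turns the right-hand side into $\langle f,\overline{Th}\rangle_\HH$; since this holds for every $f\in\HH$, we conclude $T^*\overline h=\overline{Th}$.

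For (b), first record the routine identification $M_{\overline\varphi}^*\overline k=\overline{M_\varphi^*k}$ for $k\in\HH$, which is nothing but the statement that the antilinear isometry $k\mapsto\overline k$ intertwines $M_\varphi$ and $M_{\overline\varphi}$ on $\HH$ and $\overline\HH$ respectively. The commutation relation above, read for every $h\in\HH$, says exactly that $\overline{T(\varphi f)}$ and $M_\varphi^*\overline{Tf}$ define the same functional on $\HH$, hence are equal; conjugating both sides and applying the identification of the previous sentence rewrites this as $T(\varphi f)=M_{\overline\varphi}^*(Tf)$, i.e.\ $TM_\varphi=M_{\overline\varphi}^*T$.

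For the final statement, assume $\Mult(\HH)$ is densely contained in $\HH$. Since $1\in\Mult(\HH)\subseteq\HH$, the hypothesis $T1=0$ is meaningful, and applying (b) with $f=1$ gives $T\varphi=TM_\varphi 1=M_{\overline\varphi}^*(T1)=0$ for every $\varphi\in\Mult(\HH)$. Boundedness of $T$ together with density of $\Mult(\HH)$ in $\HH$ then force $T\equiv 0$; the reverse implication is trivial. The only mild obstacle in the whole proof is the careful bookkeeping between inner products on $\HH$ and $\overline\HH$ and between the operators $M_\varphi^*$, $M_{\overline\varphi}$, and $M_{\overline\varphi}^*$; once those identifications are in place, everything follows from the elementary $\ker\rho$ elements identified in the first paragraph.
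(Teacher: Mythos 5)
Your proof is correct and follows essentially the same route as the paper: both extract the identities by testing the functional $f\otimes g\mapsto\langle g,\overline{Tf}\rangle$ against elementary elements of $\ker\rho$ coming from commutativity of pointwise multiplication (the paper uses the single combined element $f\otimes\varphi g-g\otimes\varphi f$, you split it into a symmetry relation and a multiplier-commutation relation, which is an immaterial difference). The bookkeeping with $\overline{\HH}$, the identity $M_{\overline\varphi}^*\overline{k}=\overline{M_\varphi^*k}$, and the final density argument via (b) with $f=1$ are all handled correctly.
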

\begin{proof} Let $T\in (\ker \rho)^\bot$. Then for  $f,g \in \HH$ and $\varphi\in \Mult(\HH)$ we have $f\otimes \varphi g -g \otimes \varphi f \in \ker \rho$, hence $\la \varphi g,\overline{Tf}\ra = \la \varphi  f, \overline{Tg}\ra = \la Tg, \overline{\varphi f}\ra_{\Hbar}= \la g, T^*\overline{\varphi f}\ra$. Thus $T^*\overline{\varphi f}=M^*_{\varphi} \overline{Tf}$, and (a) follows by taking $\varphi=1$.

Next we substitute (a) into $T^* M_{\overline{\varphi}}\overline{f}=T^*\overline{\varphi f}=M^*_{\varphi} \overline{Tf}$, then (b) follows by taking adjoints.

The remaining part of the Lemma follows from (b).
\end{proof}

Thus, if $\Mult(\HH)$ is densely contained in $\HH$, then an operator  $T\in (\ker \rho)^\bot$ is uniquely associated with the function  $T1$, and $T$
intertwines multiplication operators and adjoints of multiplication operators, hence $T$ deserves to be called a Hankel operator.

\begin{definition} Let $\HH$ be a Hilbert function space such that $\Mult(\HH)$ is densely contained in $\HH$. Then define
\begin{equation*}
  \Han(\HH) = \{ \overline{ T 1} \in {\HH}: T \in (\ker \rho)^{\bot}\}.
\end{equation*}
For $b\in \Han(\HH)$ we write $H_b\in \HB(\HH, \Hbar)$ for the unique operator in $(\ker \rho)^\bot$ that satisfies $H_b1=\overline{b}$, and we set $\|b\|_{\Han(\HH)}= \|H_b\|$.

Furthermore, we define $$\Han_0(\HH)=\{b\in \Han(\HH): H_b \text{ is compact }\}.$$
\end{definition}
With these definitions we have that $\Han_0(\HH)$ is isometrically isomorphic to $^\bot(\ker \rho)$, and  the following Theorem holds.

\begin{theorem}  \label{theo:han_basic}
Let $\HH$ be a Hilbert function space such that $\Mult(\HH)$ is densely contained in $\HH$. Then  the following conjugate linear isometric isomorphisms hold:

(a) $\Han_0(\HH)^* \cong \WPH$ and

(b) $(\WPH)^* \cong \Han(\HH)$.

If $b\in \Han(\HH)$, then the associated linear functional $L_b$ satisfies $$L_b(\varphi f)=\la \varphi f,{b}\ra = \la f, \overline{H_b \varphi}\ra= \la \varphi, \overline{H_b f}\ra $$ for every $f\in \HH$ and $\varphi\in \Mult(\HH)$.
\end{theorem}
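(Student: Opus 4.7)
The plan is to combine the tensor-product realization of $\WPH$ from the proof of Theorem~\ref{predual} with the rigidity supplied by Lemma~\ref{HankelLemma}, so as to identify the annihilator and pre-annihilator of $\ker\rho$ inside $\mathcal B(\HH,\Hbar)$ and $\mathcal K(\HH,\Hbar)$ with spaces of Hankel symbols.

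First I would recall from the proof of Theorem~\ref{predual} that $\rho\colon\HH\otimes_{\pi}\HH\to\HH\odot\HH$ is a metric quotient and that $\HH\otimes_{\pi}\HH$ is isometrically identified (via the map $\Phi$) with $\tc(\Hbar,\HH)$, whose dual is $\mathcal B(\HH,\Hbar)$ via trace duality and whose natural predual is $\mathcal K(\HH,\Hbar)$. Transported through $\Phi$, the pairing between $f\otimes g\in\HH\otimes_{\pi}\HH$ and $T\in\mathcal B(\HH,\Hbar)$ is precisely $\la f\otimes g,T\ra=\la g,\overline{Tf}\ra$, as in \eqref{functional}. Since $\ker\rho$ is weak-$*$ closed in $\tc(\Hbar,\HH)$ (shown in the proof of Theorem~\ref{predual}), the standard duality for weak-$*$-closed subspaces of dual spaces produces isometric identifications $(\WPH)^*\cong(\ker\rho)^\perp\subseteq\mathcal B(\HH,\Hbar)$ and $({}^\perp(\ker\rho))^*\cong\WPH$, where ${}^\perp(\ker\rho)\subseteq\mathcal K(\HH,\Hbar)$ is exactly the set of compact operators that belong to $(\ker\rho)^\perp$.

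Second, I would read Lemma~\ref{HankelLemma} as asserting that, under the density hypothesis on $\Mult(\HH)$, the assignment $T\mapsto\overline{T1}$ is injective on $(\ker\rho)^\perp$ and its image is $\Han(\HH)$ by definition, with inverse $b\mapsto H_b$. This bijection is conjugate-linear (because of the complex conjugate in $\overline{T1}$) and is isometric by the very definition $\|b\|_{\Han(\HH)}=\|H_b\|$. Combined with the first paragraph this yields (b). Intersecting on the operator side with the compact operators corresponds, again by definition, to passing from $\Han(\HH)$ to $\Han_0(\HH)$ on the symbol side, whence $\Han_0(\HH)\cong{}^\perp(\ker\rho)$ isometrically and conjugate-linearly, which gives (a).

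Finally, for the formula for $L_b$ I would substitute $T=H_b$ into the pairing above. The simple tensor $\varphi\otimes f$ gives $L_b(\varphi f)=\la f,\overline{H_b\varphi}\ra$; the alternative tensor $f\otimes\varphi$, which has the same image under $\rho$, gives $L_b(\varphi f)=\la\varphi,\overline{H_b f}\ra$. Using Lemma~\ref{HankelLemma}(b) to rewrite $H_b\varphi=H_bM_\varphi 1=M^*_{\overline{\varphi}}\,\overline{b}$ and then pushing the conjugate back inside converts $\la f,\overline{H_b\varphi}\ra$ into $\la\varphi f,b\ra$. The point that requires genuine care, as opposed to bookkeeping with bars and adjoints, is the matching of the two notions of predual: one must verify that the pre-annihilator ${}^\perp(\ker\rho)$ inside $\mathcal K(\HH,\Hbar)$ really coincides with $\mathcal K(\HH,\Hbar)\cap(\ker\rho)^\perp$, so that compactness of $H_b$ on the symbol side corresponds exactly to weak-$*$ continuity of $L_b$ on the functional side.
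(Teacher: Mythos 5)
Your proposal is correct and follows essentially the same route as the paper: the paper's proof consists of the trace-duality identifications $(\WPH)^*\cong(\ker\rho)^\bot\cong\Han(\HH)$ and $\Han_0(\HH)\cong{}^\bot(\ker\rho)$ already set up in Section~\ref{SecHankel} (via the quotient map $\rho$, the pairing \eqref{functional}, and Lemma~\ref{HankelLemma}), followed by exactly the computation of $L_b(\varphi f)$ that you carry out. The predual-matching point you flag at the end is the standard fact that under $\mathcal K^{**}\cong\mathcal B$ the two pairings agree, so ${}^\bot(\ker\rho)=\mathcal K(\HH,\Hbar)\cap(\ker\rho)^\bot$; the paper takes this for granted.
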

\begin{proof} We have explained the isometric isomorphisms above. Let $f\in \HH$ and $\varphi\in \Mult(\HH)$.
According to (\ref{functional}) and the definition of $H_b$ we have $L_b(\varphi f)=\la f , \overline{H_b \varphi}\ra= \la \varphi, \overline{H_b f}\ra$. But then Lemma \ref{HankelLemma} implies
$$L_b(\varphi f)=\la f, H_b^* \overline{\varphi} \ra=  \la f, M^*_{\varphi}H_b^*1\ra= \la \varphi f, \overline{H_b1}\ra = \la \varphi f, {b}\ra.$$
\end{proof}

Theorem \ref{theo:han_basic} does not address the question of how one can easily identify which functions are in $\Han(\HH)$. The set
\begin{equation*}
  \mathcal X(\HH) =
    \{ {b} \in {\HH}: \exists C \ge 0 |\langle \varphi f,b \rangle| \le C ||\varphi||_{\HH} \, ||f||_{\HH} \forall f \in \HH, \varphi \in \Mult(\HH) \}
\end{equation*}
seems more suited for that question, and Theorem \ref{theo:han_basic} implies that   $\Han(\HH)\subseteq \mathcal X(\HH)$.

We note that Theorems 1.2 and 1.3 of \cite{RiSuWeakProd}  imply that $\Han(B^s_\om) = \mathcal X(B^s_\om)= (B^s_\om\otimes B^s_\om)^*$ for all admissible radial weights $\om$ and all $s\in \R$.
Using the main result of \cite{AlHMcCr_Factors}, we will now show that the equality $\Han(\HH) = \mathcal X(\HH)$ also
holds whenever $\HH$ is a complete Pick space with $M^C(\HH) \subseteq M^R(\HH)$.

\begin{theorem}
 Let $\HH$ be a separable Hilbert function space on the non-empty set $X$, and suppose that  the reproducing kernel for $\HH$ is a complete Pick kernel, which is normalized at a point $z_0\in X$.

 If $M^C(\HH) \subseteq M^R(\HH)$, then $\Han(\HH) = \mathcal X(\HH)$.
\end{theorem}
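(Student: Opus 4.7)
The inclusion $\Han(\HH) \subseteq \mathcal X(\HH)$ is recorded immediately before the statement, so my plan is to establish the reverse inclusion. Fix $b \in \mathcal X(\HH)$ with constant $C$. By Theorem \ref{theo:han_basic}(b), it suffices to show that the linear functional
\begin{equation*}
L_b : \varphi f \longmapsto \langle \varphi f, b \rangle_\HH, \qquad \varphi \in \Mult(\HH),\ f \in \HH,
\end{equation*}
initially defined on the algebraic span of $\Mult(\HH) \cdot \HH$, extends to a bounded linear functional on all of $\HH \odot \HH$. Once this is done, the Hankel operator produced by the duality will satisfy $\overline{T1} = b$, since testing the extended $L_b$ against $\varphi \cdot 1$ and invoking density of $\Mult(\HH)$ in $\HH$ recovers $b$.

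The $\mathcal X(\HH)$-bound gives $|L_b(\varphi f)| \leq C \|\varphi\|_\HH \|f\|_\HH$ on elementary products, but $\|h\|_{\HH \odot \HH}$ is an infimum over \emph{all} representations $h = \sum u_n v_n$ with $u_n, v_n \in \HH$, not only those in which $u_n$ is a multiplier. Hence the real obstacle is to convert an arbitrary weak-product decomposition into one of the form $\sum \varphi_n f_n$ with multiplier first factors and controlled total cost. This is exactly where the main result of \cite{AlHMcCr_Factors} enters: for a normalized complete Pick space $\HH$ satisfying $M^C(\HH) \subseteq M^R(\HH)$, every $h \in \HH \odot \HH$ admits a decomposition
\begin{equation*}
h = \sum_{n} \varphi_n f_n, \qquad \varphi_n \in \Mult(\HH),\ f_n \in \HH, \qquad \sum_n \|\varphi_n\|_\HH \|f_n\|_\HH \leq K \|h\|_{\HH \odot \HH}
\end{equation*}
for a universal constant $K$. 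The hypothesis $M^C(\HH) \subseteq M^R(\HH)$ is precisely what allows a column of $\HH$-functions to be rewritten as a row of multiplier actions, and this is the heart of the factorization argument.

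Granted this factorization, the proof is completed by the direct estimate
\begin{equation*}
|L_b(h)| \leq \sum_n |\langle \varphi_n f_n, b\rangle| \leq C \sum_n \|\varphi_n\|_\HH \|f_n\|_\HH \leq CK \|h\|_{\HH \odot \HH},
\end{equation*}
first on the dense subspace $\Mult(\HH) \cdot \HH$ and then by continuity on all of $\HH \odot \HH$. The resulting bounded functional corresponds by Theorem \ref{theo:han_basic}(b) to some $\tilde b \in \Han(\HH)$, and comparing values on products $\varphi \cdot 1$ for $\varphi \in \Mult(\HH)$ yields $b = \tilde b$ by density. The main obstacle is invoking the factorization theorem of \cite{AlHMcCr_Factors} in exactly the form needed and tracking the constants; everything else is routine functional analysis.
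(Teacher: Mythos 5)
Your overall strategy --- reduce the problem to showing that $\varphi f \mapsto \langle \varphi f, b\rangle$ extends to a bounded functional on $\HH\odot\HH$ and then invoke Theorem \ref{theo:han_basic} --- is the right one and is the same as the paper's. However, there are two genuine gaps, and they sit exactly at the points you identify as the ``main obstacle'' and then pass over. The decomposition you invoke, namely that every $h\in\HH\odot\HH$ can be written as $h=\sum_n\varphi_n f_n$ with $\varphi_n\in\Mult(\HH)$ and $\sum_n\|\varphi_n\|_{\HH}\|f_n\|_{\HH}\le K\|h\|_{\HH\odot\HH}$, is not the main result of \cite{AlHMcCr_Factors}. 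That theorem says: given $f_n\in\HH$ with $\sum_n\|f_n\|^2\le 1$, there are contractive multipliers $\psi,\varphi_n$ with $\psi(z_0)=0$, $\|\psi u\|^2+\sum_n\|\varphi_n u\|^2\le\|u\|^2$ for all $u\in\HH$, and $f_n=\varphi_n/(1-\psi)$. The first factors of a weak-product decomposition are thereby exhibited as quotients of multipliers, not replaced by multipliers, and the functions $\varphi_n/(1-\psi)^2$ that would have to serve as your second factors need not lie in $\HH$. A decomposition of roughly the kind you want can be manufactured by regularizing the denominator, i.e.\ passing to $f_n^{(r)}=\varphi_n/(1-r\psi)$, which are multipliers forming a bounded column and which converge to $[f_n]$ in $\HH(\ell^2)$ as $r\to1$; but carrying this out is the actual content of the proof, and the paper does precisely this at the level of the projective tensor product, where the goal becomes showing that the induced functional annihilates $\ker\rho$.

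Second, even granting your factorization, the estimate $|L_b(h)|\le\sum_n|\langle\varphi_n f_n,b\rangle|$ presupposes the identity $\langle h,b\rangle=\sum_n\langle\varphi_n f_n,b\rangle$. The right-hand side converges absolutely by the $\mathcal X(\HH)$ bound, but the partial sums $\sum_{n\le N}\varphi_n f_n$ converge to $h$ only in $\HH\odot\HH$, and continuity of the pairing with $b$ against that topology is exactly what you are trying to prove --- so the interchange is circular as written. What rescues it is the hypothesis $M^C(\HH)\subseteq M^R(\HH)$: applied to the regularized column $[f_n^{(r)}]$, it guarantees that $\sum_n f_n^{(r)}g_n$ converges in the norm of $\HH$, where pairing with $b\in\HH$ is continuous. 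This is where the row--column hypothesis actually enters the paper's proof; in your write-up it is attributed to the factorization step instead, and the convergence issue is left unaddressed. Both gaps are repairable, but they constitute the substance of the argument rather than routine functional analysis.
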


\begin{proof}
  Since $\HH$ has a normalized complete Pick kernel, $\Mult(\HH)$ is densely contained in $\HH$.
  As mentioned above, by Theorem \ref{theo:han_basic}   it suffices to show that $\mathcal X(\HH) \subseteq \Han(\HH)$.
  Let ${b} \in \mathcal X(\HH)$.
  In order to show that ${b} \in \Han(\HH)$, we note that
  the definition of $\mathcal X(\HH)$ and the universal property of the projective tensor product show that there exists
  a bounded linear functional $L$ on $\HH \otimes_{\pi} \HH$ such that
  \begin{equation*}
    L(f \otimes \varphi) = \langle f \varphi,b \rangle
  \end{equation*}
  for all $f \in \HH$ and $\varphi \in \Mult(\HH)$. We claim that $L \in (\ker\rho)^\bot$.
  Assuming this claim for a moment, we can regard $L$ as a functional on $\HH \odot \HH$, hence
  by Theorem \ref{theo:han_basic},
  there exists ${c} \in \Han(\HH)$ such that
  \begin{equation*}
    \langle \varphi f, c \rangle = L(\varphi f) = \langle \varphi f, b  \rangle
  \end{equation*}
  for all $f \in \HH, \varphi \in \Mult(\HH)$, so that ${b} = {c} \in \Han(\HH)$.

  To prove the claim, let $h \in \ker\rho$ with $||h||_{\HH \otimes_\pi \HH} < 1$.
  We wish to show that $L(h) = 0$. To this end, observe that there
  exist $f_n,g_n \in \HH$ with $||f_n|| = ||g_n||$ for all $n$,
  \begin{equation*}
    h = \sum_{n=1}^\infty f_n \otimes g_n
  \end{equation*}
  and $\sum_{n=1}^\infty ||f_n||^2 = 1$.
  By \cite[Theorem 1.1]{AlHMcCr_Factors},
  there exist $\psi, \varphi_n \in \Mult(\HH)$ such that $\psi(z_0) =0$,
  \begin{equation*}
    || \psi h ||^2 + \sum_n ||\varphi_n h||^2 \le ||h||^2 \quad (h \in \HH)
  \end{equation*}
  and $f_n = \frac{\varphi_n}{1 - \psi}$ for all $n \in \mathbb N$. For $r \in (0,1)$, let
  \begin{equation*}
    f_n^{(r)} = \frac{\varphi_n}{1 -r \psi}.
  \end{equation*}
  Then $[f_n^{(r)}]_{n=1}^\infty \in \Mult(\HH, \HH(\ell^2))$ for each $r<1$ by \cite[Lemma 3.6 (i)]{AlHMcCr_Factors}.
  Moreover, by the remark at the end of Section 3 of \cite{AlHMcCr_Factors}, $[f_n^{(r)}]$
  converges to $[f_n]$ in the norm of $\HH(\ell^2)$ as $r \to 1$. Thus,
  \begin{align*}
    &\Big\| \sum_n f_n^{(r)} \otimes g_n - \sum_n f_n \otimes g_n \Big\|^2_{\HH \otimes_\pi \HH} \\
    \le &\Big( \sum_n ||f_n^{(r)} - f_n||^2 \Big)
    \Big( \sum_n ||g_n||^2 \Big) \xrightarrow{r \to 1} 0,
  \end{align*}
  so by continuity of $L$, it suffices to show that
  \begin{equation*}
    L \Big( \sum_{n} f_n^{(r)} \otimes g_n \Big) = 0
  \end{equation*}
  for all $r \in (0,1)$.

  To see this, fix $r \in (0,1)$.
  The series $\sum_n f_n^{(r)} \otimes g_n$ converges absolutely in the norm of $\HH \otimes_{\pi} \HH$, so that
  \begin{equation*}
    L \Big( \sum_{n} f_n^{(r)} \otimes g_n \Big) = \sum_{n} L(f_n^{(r)} \otimes g_n)
    = \sum_n \langle f_n^{(r)} g_n, b \rangle.
  \end{equation*}
  Since $[f_n^{(r)}] \in \Mult(\HH,\HH(\ell^2))$, the assumption $M^C(\HH) \subseteq M^R(\HH)$
  implies that the series $\sum_{n} f_n^{(r)}  g_n$
  converges in $\HH$, so that
  \begin{equation*}
    L \Big( \sum_{n} f_n^{(r)} \otimes g_n \Big) = \Big \langle \sum_n f_n^{(r)} g_n, b \Big \rangle = 0,
  \end{equation*}
  where the last equality follows from the observation that
  since $h \in \ker \rho$, we also have
  \begin{equation*}
    \sum_{n=1}^\infty f_n^{(r)}(z) g_n(z) = \frac{1 - \psi(z)}{1 - r \psi(z)} \sum_{n=1}^\infty f_n(z) g_n(z) = 0
  \end{equation*}
  for all $z \in X$.
\end{proof}

\section{Weak products of complete Pick spaces}
\label{WeakProdPick}
\subsection{Functions as ratios of multipliers}

\begin{theorem}\label{RowColumnMultiplier} Let $\HH$ be a separable Hilbert function space on a set $X$ with reproducing kernel $k_z\ne 0$ for all $z\in X$, and let $\{\Phi_n^C\}_{n\ge 1}, \{\Psi_n^C\}_{n\ge 1}\subseteq  M^C(\HH)$ be sequences of column operators such that
$$\sum_{n\ge 1}\|\Phi_n^C f\|^2_{\HH\otimes \ell_2} \le \|f\|^2_\HH \text{ and } \sum_{n\ge 1}\|\Psi_n^C f\|^2_{\HH\otimes \ell_2} \le \|f\|^2_\HH$$ for all $f\in \HH$.

 Then for each $n\in \N$ we have  $\Psi_n^R \Phi_n^C \in \Mult(\HH\odot\HH)$ and  $$\sum_{n\ge 1}\|\Psi_n^R \Phi_n^Ch\|_{\WPH} \le \|h\|_{\WPH}$$ for all $h\in \WPH$.

Furthermore,  if $\HH$ satisfies the continuous inclusion $M^C(\HH)\subseteq  M^R(\HH)$,  then  for each $n\in \N$ we have  $ \Psi_n^R \Phi_n^C\in \Mult(\HH)$ and
there is a $c>0$ such that for all   $f\in \HH$ we have
 $$\sum_{n\ge 1} \|\Psi_n^R \Phi_n^Cf\|_\HH^2\le c\|f\|_\HH^2 \text{ and }$$ $$\|\sum_{n=1}^N \Psi_n^R \Phi_n^Cf\|^2_\HH\le c\|f\|^2_\HH \ \text{ for each }N\in \N.$$
\end{theorem}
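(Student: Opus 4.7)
The plan is to identify the composition $\Psi_n^R \Phi_n^C$ with pointwise multiplication by the formal symbol $\sigma_n := \sum_j \psi_{n,j} \varphi_{n,j}$, where $\Phi_n = (\varphi_{n,j})_{j \ge 1}$ and $\Psi_n = (\psi_{n,j})_{j \ge 1}$ are the underlying sequences of multipliers. A standard computation (testing $(\Phi_n^C)^*$ against reproducing kernels) shows $\sum_j |\varphi_{n,j}(z)|^2 \le \|\Phi_n^C\|^2$, and likewise for the $\psi_{n,j}$, so Cauchy-Schwarz makes $\sigma_n$ a well-defined bounded function on $X$. The two conclusions then require essentially independent arguments.

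For the first conclusion, I will fix a near-optimal weak-product representation $h = \sum_i f_i g_i$ of a given $h \in \WPH$ and rearrange pointwise to write
\begin{equation*}
  \sigma_n h = \sum_{i,j} (\psi_{n,j} f_i)(\varphi_{n,j} g_i).
\end{equation*}
Cauchy-Schwarz in $j$ gives $\sum_j \|\psi_{n,j} f_i\|\,\|\varphi_{n,j} g_i\| \le \|\Psi_n^C f_i\|\,\|\Phi_n^C g_i\|$, exhibiting the display as a bona fide weak-product representation with $\|\sigma_n h\|_{\WPH} \le \sum_i \|\Psi_n^C f_i\|\,\|\Phi_n^C g_i\|$. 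Summing over $n$, swapping the two summations by positivity, and applying Cauchy-Schwarz in $n$ together with the hypothesized bounds will give $\sum_n \|\sigma_n h\|_{\WPH} \le \sum_i \|f_i\|\,\|g_i\|$; the infimum over representations produces the claimed inequality, and each $\sigma_n$ is in particular a contractive multiplier of $\WPH$.

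For the second conclusion, assume the continuous inclusion $M^C(\HH) \subseteq M^R(\HH)$ with constant $c_0$. I will stack all the columns into a single column operator $\Phi^C := (\Phi_n^C)_{n} \colon \HH \to \HH \otimes \ell_2(\mathbb N \times \mathbb N)$, which has norm $\le 1$ by hypothesis, and introduce the block-diagonal row operator
\begin{equation*}
  B \colon \HH \otimes \ell_2(\mathbb N \times \mathbb N) \to \HH \otimes \ell_2(\mathbb N), \qquad B (h_{n,j})_{n,j} = \bigl(\Psi_n^R (h_{n,\cdot})\bigr)_{n}.
\end{equation*}
Because each $\|\Psi_n^C\| \le 1$, the hypothesis forces $\|\Psi_n^R\| \le c_0$ uniformly, whence $\|B\| \le c_0$; since $(B \Phi^C f)_n = \sigma_n f$, we obtain $\sum_n \|\sigma_n f\|_{\HH}^2 \le c_0^2 \|f\|_{\HH}^2$. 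The partial-sum bound follows by the same recipe applied to the truncated stackings $\tilde\Phi_N^C$ and $\tilde\Psi_N^C$ of the first $N$ columns, each of which has norm $\le 1$: one has $\|\tilde\Psi_N^R\| \le c_0$ and $\tilde\Psi_N^R \tilde\Phi_N^C f = \sum_{n=1}^N \sigma_n f$, yielding the estimate with $c = c_0^2$.

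The only points requiring care are the well-definedness of $\sigma_n$ as a genuine multiplier (rather than a merely formal series) and the justification that the double-indexed weak-product expansion really equals $\sigma_n h$; both follow from absolute convergence of the positive norm series together with the pointwise/weak-star control on $\WPH$ provided by Theorem~\ref{predual} and Corollary~\ref{weak*convergence}.
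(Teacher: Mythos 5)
Your proposal is correct and follows essentially the same route as the paper: the first inequality is obtained from the identical pointwise rearrangement $\sigma_n f g=\sum_j(\psi_{nj}f)(\varphi_{nj}g)$ followed by Cauchy--Schwarz in $j$ and then in $n$, and the second part is the paper's row-from-column argument with the stacking/truncation made explicit. No gaps.
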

\begin{proof} For each $n\in \N$ let $\Phi_n=\{\varphi_{ni}\}_{i\ge 1}, \Psi_n=\{\psi_{ni}\}_{i\ge 1}$ define bounded column operators that satisfy the hypothesis of the Theorem. Then for any $f,g \in \HH$ we have
  $$\sum_{i\ge 1}\|\varphi_{ni} \psi_{ni} fg\|_{\HH\odot\HH} \le \sum_{i\ge 1}\|\varphi_{ni} f\|\|\psi_{ni} g\| \le \|\Phi_n^C f\|\|\Psi_n^C g\|.$$
It is well-known and easy to show that $\|\Phi_n(z)\|_{\ell_2}\le \|\Phi_n^C\|_{M^C(\HH)}$ and $\|\Psi_n(z)\|_{\ell_2} \le  \|\Psi_n^C\|_{M^C(\HH)}$ for each $z \in X$. Thus $\Psi_n^R(z) \Phi_n^C(z)= \sum_{i\ge 1}\psi_{ni}(z)\varphi_{ni}(z)$ converges absolutely, $(\Psi_n^R(z) \Phi_n^C(z))f(z)g(z)= \sum_{i\ge 1} (\varphi_{ni} f)(z)(\psi_{ni} g)(z)$,
and hence
\begin{align*}\sum_{n\ge 1}\|(\Psi_n^R \Phi_n^C)fg\|_{\HH\odot\HH} &\le \sum_{n,i\ge 1}\|\varphi_{ni} \psi_{ni} fg\|_{\HH\odot\HH} \\
  &\le \sum_{n\ge 1} \|\Phi_n^Cf\|_{\HH \otimes \ell^2}\|\Psi_n^Cg\|_{\HH \otimes \ell^2}\\
&\le \|f\|_\HH\|g\|_\HH.\end{align*} Let $h\in \HH\odot \HH$ and let
 $\{f_j\},\{g_j\} \in \bigoplus_{j = 1}^\infty \HH$ with $h= \sum_{j=1}^\infty f_jg_j$. Then
 \begin{align*} \sum_{n\ge 1}\|(\Psi_n^R \Phi_n^C)h\|_{\HH\odot\HH} &\le \sum_{n,j=1}^\infty \|(\Psi_n^R \Phi_n^C)f_jg_j\|_{\HH\odot\HH} \\
 &\le \sum_{j=1}^\infty \|f_j\|_\HH\|g_j\|_\HH.
\end{align*} Taking the infimum over all possible representations $h= \sum_{j=1}^\infty f_jg_j$, we obtain $\sum_{n\ge 1}\|(\Psi_n^R \Phi_n^C)h\|_{\HH\odot\HH} \le \|h\|_{\WPH}$.

If every bounded column operator on $\HH$ induces a bounded row operator and the inclusion has norm $\sqrt{c}$, then since for each $n$ we have $\|\Psi^C_n\|\le 1$ it follows  easily that $\Psi_n^R \Phi_n^C \in \Mult(\HH)$ with $\|\Psi_n^R \Phi_n^C f\|^2 \le c\|\Phi_n^C f\|^2_{\HH \otimes \ell_2}$. Thus, an application of the hypothesis finishes the proof of the Theorem.\end{proof}

It was shown in \cite{AlHaMcCRiM}, see also \cite{AlHMcCr_Factors},
that if $\mathcal H$ is a Hilbert function space on $X$ with a
complete Pick kernel, normalized at a point $z_0 \in X$,
then every $f \in \mathcal H$ can be written as $f = \frac{\varphi}{1- \psi}$,
where $\varphi,\psi \in \Mult(\mathcal H )$, $||\psi||_{\Mult(\mathcal H)} \le 1$ and $\psi(z_0) = 1$. In this case, $|\psi(z)| < 1$ for all $z \in X$, hence $\frac{\varphi}{1- \psi}$ is defined on $X$, see Lemma 2.2 of \cite{AlHaMcCRiM}.
\cite{AlHMcCr_Factors} also contains a vector version of this result.
The following
lemma is an analogue of this result for the weak product $\mathcal H \odot \mathcal H$.

\begin{lemma}\label{WeakProductQuotientLemma}  Let $\HH$ be a separable Hilbert function space on the non-empty set $X$, and suppose that  the reproducing kernel for $\HH$ is a complete Pick kernel, which is normalized at a point $z_0\in X$.

If  $\{h_n\}_{n\ge 1}\subseteq   \HH \odot \HH$, $\sum_{n \ge 1}\|h_n\|_{\HH\odot\HH}< 1$,  then  there are $\psi \in \Mult(\HH)$ and $\{\Phi_n^C\}_{n\ge 1} \subseteq  M^C(\HH)$ such that

 (a)  $\|\psi\|_{\Mult(\HH)}\le 1$ and $\psi(z_0)=0$,

 (b)  $\sum_{n\ge 1}\|\Phi_n^Cu\|^2_{\HH\otimes \ell_2}\le \|u\|_\HH^2$ for all $u \in \HH$,

 (c) for each $n \in \N$ $h_n=\frac{\varphi_n}{(1-\psi)^2}$  with $\varphi_n=\Phi_n^R\Phi_n^C\in \Mult(\HH\odot \HH)$ and
 $$\sum_{n\ge 1}\|\varphi_nh\|_{\WPH}\le \|h\|_{\WPH} \text{ for all }h\in \WPH.$$

 (d) If additionally it is true that $M^C(\HH)\subseteq  M^R(\HH)$, then
 there is a $c>0$ such that
 $$\sum_{n\ge 1} \|\varphi_nf\|^2_\HH\le c \|f\|^2_\HH$$ for each $f\in \HH$. Furthermore, for each an $N\in \N$ we have $$\|\sum_{n=1}^N \varphi_n\|_{\Mult(\HH)} \le c.$$
\end{lemma}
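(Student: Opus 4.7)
The plan is to apply the factorization theorem of \cite{AlHMcCr_Factors} simultaneously to the numerator vectors coming from representations of all the $h_n$, to use a polarization trick to convert the two resulting families of multipliers into a single family whose row--column composition yields the desired $\varphi_n$, and then to invoke Theorem \ref{RowColumnMultiplier} with $\Psi_n = \Phi_n$ to deliver conclusions (c) and (d).

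First, I would choose $\epsilon_n > 0$ with $\sum_n(\|h_n\|_{\HH\odot\HH} + \epsilon_n) < 1/2$, pick representations $h_n = \sum_k f_{n,k} g_{n,k}$ with $\sum_k \|f_{n,k}\|\,\|g_{n,k}\| < \|h_n\|_{\HH\odot\HH} + \epsilon_n$, and rescale each term so that $\|f_{n,k}\| = \|g_{n,k}\|$. Then the concatenated sequence $(f_{n,k},g_{n,k})_{n,k}$ has squared $\ell^2$-sum strictly less than $1$, so \cite[Theorem~1.1]{AlHMcCr_Factors} produces $\psi\in\Mult(\HH)$ with $\psi(z_0)=0$ together with multipliers $\alpha_{n,k},\beta_{n,k}$ satisfying
\[
  f_{n,k} = \frac{\alpha_{n,k}}{1-\psi}, \qquad g_{n,k} = \frac{\beta_{n,k}}{1-\psi},
\]
and $\|\psi h\|^2 + \sum_{n,k}(\|\alpha_{n,k} h\|^2 + \|\beta_{n,k} h\|^2) \le \|h\|^2$ for every $h\in\HH$. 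In particular $\|\psi\|_{\Mult(\HH)} \le 1$, which establishes (a), and
\[
  h_n = \sum_k f_{n,k} g_{n,k} = \frac{1}{(1-\psi)^2}\sum_k \alpha_{n,k}\beta_{n,k}
\]
pointwise on $X$.

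The delicate point is that conclusion (c) demands $\varphi_n = \Phi_n^R \Phi_n^C$, so row and column must come from the \emph{same} sequence, whereas the factorization naturally produces two distinct families. I would resolve this by polarization: define
\[
  \Phi_n = \Big\{\tfrac{\alpha_{n,k}+\beta_{n,k}}{2}\Big\}_{k\ge 1} \cup \Big\{\tfrac{i(\alpha_{n,k}-\beta_{n,k})}{2}\Big\}_{k\ge 1},
\]
where $i = \sqrt{-1}$. A direct computation using $(a+b)^2 - (a-b)^2 = 4ab$ shows $\Phi_n^R\Phi_n^C = \sum_k \alpha_{n,k}\beta_{n,k}$ pointwise on $X$, while the parallelogram identity yields
\[
  \|\Phi_n^C h\|^2 = \tfrac{1}{2}\sum_k\bigl(\|\alpha_{n,k} h\|^2 + \|\beta_{n,k} h\|^2\bigr),
\]
so that $\sum_n \|\Phi_n^C h\|^2 \le \tfrac{1}{2}\|h\|^2 \le \|h\|^2$, which is (b).

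With (b) in place I would apply Theorem \ref{RowColumnMultiplier} with $\Psi_n = \Phi_n$. Its first conclusion supplies $\varphi_n := \Phi_n^R\Phi_n^C \in \Mult(\HH\odot\HH)$ together with the estimate $\sum_n \|\varphi_n h\|_{\WPH} \le \|h\|_{\WPH}$; combined with the identity for $h_n$ displayed above this yields $h_n = \varphi_n/(1-\psi)^2$, establishing (c). Under the additional hypothesis $M^C(\HH)\subseteq M^R(\HH)$, the second conclusion of Theorem \ref{RowColumnMultiplier} directly delivers (d). The main obstacle is the symmetrization: without the polarization trick one is stuck with an asymmetric composition $\Psi_n^R \Phi_n^C$ built from two distinct families, which does not match the self-composition required by the hypothesis of Theorem \ref{RowColumnMultiplier}; combining polarization with the parallelogram identity is precisely the device that converts the asymmetric form into a symmetric one at the cost of the harmless factor $\tfrac{1}{2}$.
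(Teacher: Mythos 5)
Your overall strategy is sound and is essentially the paper's: both arguments rest on Theorem 1.1 of \cite{AlHMcCr_Factors}, a polarization identity to force the row and the column to come from a single family, and an application of Theorem \ref{RowColumnMultiplier} with $\Psi_n=\Phi_n$. The only structural difference is the order of operations: the paper polarizes \emph{before} factoring, writing each $h_n=\sum_j f_{nj}^2$ via $fg=\left(\frac{f+g}{2}\right)^2-\left(\frac{f-g}{2}\right)^2$ together with $\|f\|\|g\|=\|\frac{f+g}{2}\|^2+\|\frac{f-g}{2}\|^2$, whereas you factor first and then polarize the resulting multipliers.

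That reordering is where your write-up breaks. You ask for $\epsilon_n>0$ with $\sum_n(\|h_n\|_{\HH\odot\HH}+\epsilon_n)<1/2$, but the hypothesis only gives $\sum_n\|h_n\|_{\HH\odot\HH}<1$, so no such $\epsilon_n$ exist once $\sum_n\|h_n\|_{\HH\odot\HH}\ge 1/2$. The $1/2$ is not a typo you can relax to $1$: since you feed the combined family $(f_{n,k})\cup(g_{n,k})$ into the factorization theorem, its squared $\ell^2$-sum is $\sum_{n,k}\bigl(\|f_{n,k}\|^2+\|g_{n,k}\|^2\bigr)=2\sum_{n,k}\|f_{n,k}\|\,\|g_{n,k}\|$, so the contractive column bound you quote really does require $\sum_n\|h_n\|_{\HH\odot\HH}<1/2$. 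This is precisely the factor of $2$ that the paper's ordering avoids: polarizing first gives a family whose sum of squares is exactly $\sum\|f\|\|g\|<1$, with no loss. Your argument is repairable without changing its structure — apply the factorization to $(f_{n,k}/\sqrt2,\,g_{n,k}/\sqrt2)$ and absorb the lost factor by using $\frac{\alpha_{n,k}\pm\beta_{n,k}}{\sqrt2}$ in place of $\frac{\alpha_{n,k}\pm\beta_{n,k}}{2}$; then $\Phi_n^R\Phi_n^C=2\sum_k\alpha_{n,k}\beta_{n,k}=(1-\psi)^2h_n$ and $\sum_n\|\Phi_n^Ch\|^2=\sum_{n,k}\bigl(\|\alpha_{n,k}h\|^2+\|\beta_{n,k}h\|^2\bigr)\le\|h\|^2$ — but as written the proof does not cover the stated hypothesis. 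Everything downstream (the polarization computation and the derivation of (c) and (d) from Theorem \ref{RowColumnMultiplier}) is correct.
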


\begin{proof}
Note that if $f,g \in \HH$ with $\|f\|=\|g\|$, then $fg= \left(\frac{f+g}{2}\right)^2 - \left(\frac{f-g}{2}\right)^2$ with $\|f\| \|g\|= \|\frac{f+g}{2}\|^2+\|\frac{f-g}{2}\|^2$ by the parallelogram law. Thus, for any $h\in \WPH$ and any $\varepsilon >0$ there are $f_j\in \HH$ with $\sum_{j\ge 1}\|f_j\|_\HH^2 \le \|h\|_\WPH +\varepsilon$ and
$h=\sum_{j\ge 1} f_j^2$. Hence for each $n$ we  choose a sequence $\{f_{nj}\}_{j\ge 1} \subseteq  \HH$ such that $h_n= \sum_{j\ge 1} f_{nj}^2$ and $\sum_{n,j\ge 1}\|f_{nj}\|_\HH^2 \le 1.$

 Then by Theorem 1.1 of \cite{AlHMcCr_Factors}, there
there are contractive multipliers $\psi$, $\{\varphi_{nj}\}_{n,j\ge 1}$ such that $\psi(z_0)=0$, $\|\psi g\|^2 + \sum_{n,j\ge 1}\|\varphi_{nj}g\|^2 \le \|g\|^2$ for all $g \in \HH$, and $f_{nj}= \frac{\varphi_{nj}}{1-\psi} $ for all $n, j \ge 1$. Then  $h_n=\frac{\varphi_n}{(1-\psi)^2}$ with $\varphi_n= \sum_{j \ge 1}\varphi_{nj}^2=\Phi_n^R\Phi_n^C$, where $\Phi_n=\{\varphi_{nj}\}_{j\ge 1}$.
Thus the lemma follows from Theorem \ref{RowColumnMultiplier} with $\Psi_n=\Phi_n$.
\end{proof}
The following Theorem is  a slight refinement of the previous Lemma in the case of a single function.

\begin{theorem}\label{WeakProductQuotient} Let $\HH$ be a separable Hilbert function space on the non-empty set $X$, and suppose that  the reproducing kernel for $\HH$ is a complete Pick kernel, which is normalized at a point $z_0\in X$.

If $h\in \HH \odot \HH$, then  there are $\varphi \in \Mult(\HH \odot \HH)$, $\|\varphi\|_{\Mult(\HH \odot \HH)}\le  \|h\|_{\HH\odot\HH}$ and $\psi \in \Mult(\HH)$, $\|\psi\|_{\Mult(\HH)}\le 1$, $\psi(z_0)=0$ such that $h=\frac{\varphi}{(1-\psi)^2}$.
\end{theorem}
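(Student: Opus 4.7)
The plan is to obtain the sharp constant $\|h\|_{\HH\odot\HH}$ via a weak-$*$ compactness argument that reduces the statement to the strict-inequality version given by Lemma \ref{WeakProductQuotientLemma}. Set $M=\|h\|_{\HH\odot\HH}$. For each $n\in\N$ the function $h/(M+1/n)$ has $\HH\odot\HH$-norm strictly less than $1$, so Lemma \ref{WeakProductQuotientLemma} applied to the singleton sequence $\{h/(M+1/n)\}$ produces $\psi_n\in\Mult(\HH)$ with $\|\psi_n\|_{\Mult(\HH)}\le 1$ and $\psi_n(z_0)=0$, together with $\widetilde\varphi_n\in\Mult(\WPH)$ of norm at most $M+1/n$, such that $h=\widetilde\varphi_n/(1-\psi_n)^2$ on $X$.

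The next step is to extract a limit of the $\psi_n$. Since $\HH$ is separable, the closed unit ball of $B(\HH)$ is metrizable and compact in the weak operator topology, and $\Mult(\HH)$ is WOT-closed in $B(\HH)$, so after passing to a subsequence we may assume $M_{\psi_n}\to M_\psi$ in WOT for some $\psi\in\Mult(\HH)$ with $\|\psi\|_{\Mult(\HH)}\le 1$. Evaluating the WOT inner products at $f=1$ and $g=k_z$ gives $\psi_n(z)=\la M_{\psi_n}1,k_z\ra\to\la M_\psi 1,k_z\ra=\psi(z)$ for every $z\in X$. In particular $\psi(z_0)=0$, and Lemma 2.2 of \cite{AlHaMcCRiM} guarantees $|\psi(z)|<1$ on $X$, so $\varphi:=h(1-\psi)^2$ is a well-defined function on $X$ which is the pointwise limit of $\widetilde\varphi_n=h(1-\psi_n)^2$.

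It remains to verify that $\varphi\in\Mult(\WPH)$ with the desired norm bound. For any $g\in\WPH$, the sequence $\widetilde\varphi_n g$ lies in $\WPH$ with norm at most $(M+1/n)\|g\|_{\WPH}$ and converges pointwise to $\varphi g$. Corollary \ref{weak*convergence} then places $\varphi g$ in $\WPH$ and yields $\|\varphi g\|_{\WPH}\le M\|g\|_{\WPH}$, so $\varphi\in\Mult(\WPH)$ with $\|\varphi\|_{\Mult(\WPH)}\le \|h\|_{\HH\odot\HH}$, and $h=\varphi/(1-\psi)^2$ by construction. I expect the main technical point to be the passage from WOT-convergence of the multiplication operators $M_{\psi_n}$ to pointwise convergence of the symbols $\psi_n$, but this is a routine computation once one evaluates the WOT inner products at the reproducing kernels.
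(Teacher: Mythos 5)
Your proof is correct and follows essentially the same route as the paper's: rescale $h$ so that Lemma \ref{WeakProductQuotientLemma} applies with constants tending to the sharp one, extract a limit of the $\psi_n$ by a compactness argument, and use Corollary \ref{weak*convergence} together with weak* lower semicontinuity of the norm to pass the multiplier bound $\|h\|_{\HH\odot\HH}$ to the limit $\varphi$. The only cosmetic difference is that you obtain $\psi$ as a WOT limit of the operators $M_{\psi_n}$, whereas the paper takes $\psi_{m_j}\to\psi$ weakly in $\HH$ and $\varphi_{m_j}\to\varphi$ weak* in $\WPH$; both give the needed pointwise convergence and contractivity.
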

\begin{proof} We assume $\|h\|_{\HH\odot\HH}=1$.  For each $m\in \N$ we apply the single function version of Lemma \ref{WeakProductQuotientLemma} with $(1-\frac{1}{m+1})h$ and thus obtain functions $\varphi_m,\psi_m$ with $\|\psi_m\|_{\Mult(\HH)} \le 1$, $\psi_m(z_0)=0$,  $\|\varphi_m\|_{\Mult(\HH\odot\HH)} \le 1$, and $h=(1+1/m)\frac{\varphi_m}{(1-\psi_m)^2}$. It follows from the hypothesis that $\HH$ and $\HH \odot \HH$ are separable, thus we can assume that there are  subsequences such that $\varphi_{m_j}\to \varphi$ in the weak* topology of $\HH \odot \HH$ and $\psi_{m_j}\to \psi$ weakly in $\HH$.

  Weak* and weak convergence imply pointwise convergence, and hence the norm bound and Corollary \ref{weak*convergence} imply $\varphi_{m_j}g \to \varphi g$ weak* in $\HH \odot \HH$ for each $g\in \HH \odot \HH$.
  Since this is valid for all $g\in \HH\odot \HH$  we conclude $\|\varphi\|_{\Mult(\HH\odot\HH)}\le 1$. Similarly $\|\psi\|_{\Mult(\HH)} \le 1$. Since $\psi(z_0)=0$ Lemma 2.2 of \cite{AlHaMcCRiM} implies that $|\psi(z)|<1$ for all $z\in X$. Thus $\varphi/(1-\psi)^2$ is well-defined and in that case it clearly must equal $h$.
\end{proof}

\begin{corollary} \label{Smirnov} Let $\HH$ be a separable Hilbert function space on the non-empty set $X$, and suppose that  the reproducing kernel for $\HH$ is a normalized complete Pick kernel. Then $$\HH\odot \HH \subseteq  N^+(\WPH).$$

If additionally $M^C(\HH)\subseteq  M^R(\HH)$, then
$$\HH\odot \HH \subseteq  N^+(\HH).$$
\end{corollary}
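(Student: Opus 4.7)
The plan is to derive both inclusions directly from Theorem \ref{WeakProductQuotient}, with the second inclusion additionally invoking part (d) of Lemma \ref{WeakProductQuotientLemma}. Fix $h \in \HH \odot \HH$. Theorem \ref{WeakProductQuotient} writes $h = \varphi/(1-\psi)^2$ with $\varphi \in \Mult(\HH \odot \HH)$ and $\psi \in \Mult(\HH)$ satisfying $\|\psi\|_{\Mult(\HH)} \le 1$ and $\psi(z_0) = 0$. The denominator $(1-\psi)^2$ clearly lies in $\Mult(\HH)$, so once we know it is cyclic in $\HH$, the first inclusion $h \in N^+(\WPH)$ follows at once.

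To verify cyclicity of $(1-\psi)^2$, I would first establish cyclicity of $1-\psi$. This is the cyclicity step already used in \cite{AlHaMcCRiM} to prove $\HH \subseteq N^+(\HH)$ for normalized complete Pick spaces: the partial sums $S_N = \sum_{n=0}^N \psi^n$ lie in $\Mult(\HH)$ and satisfy $(1-\psi) S_N = 1 - \psi^{N+1}$, and in a complete Pick space a contractive multiplier $\psi$ with $\psi(z_0) = 0$ satisfies $\psi^{N+1} f \to 0$ in $\HH$-norm for every $f \in \HH$ (Lemma 2.2 of \cite{AlHaMcCRiM} provides the pointwise bound $|\psi(z)| < 1$, and the standard $\ell^2$-summability of $\{\psi^n f\}$ in complete Pick spaces upgrades this to norm convergence). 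Hence $(1-\psi)S_N f \to f$ in $\HH$, so $\Mult(\HH) \cdot (1-\psi)$ is dense in $\HH$. Cyclicity of $(1-\psi)^2$ then follows by a two-step approximation: given $f \in \HH$, first approximate $f$ by $\alpha(1-\psi)$ with $\alpha \in \Mult(\HH)$; then, viewing $\alpha \in \Mult(\HH) \subseteq \HH$, approximate $\alpha$ by $\beta(1-\psi)$ with $\beta \in \Mult(\HH)$ and multiply by $(1-\psi)$, which is bounded on $\HH$, to conclude that $\beta(1-\psi)^2$ approximates $\alpha(1-\psi)$, and hence $f$, in $\HH$.

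For the second inclusion, the only change is to upgrade $\varphi$ from $\Mult(\HH\odot\HH)$ to $\Mult(\HH)$. Under the extra hypothesis $M^C(\HH) \subseteq M^R(\HH)$, I would repeat the proof of Theorem \ref{WeakProductQuotient} verbatim, but invoke the single-function version of Lemma \ref{WeakProductQuotientLemma} together with its part (d). Part (d) produces the same functions $\varphi_m$ as before, now with a uniform bound on $\|\varphi_m\|_{\Mult(\HH)}$ rather than only on $\|\varphi_m\|_{\Mult(\HH \odot \HH)}$. Passing to a further subsequence along which $M_{\varphi_m}$ is WOT-convergent on $\HH$ gives a limit operator $M_\varphi$ with $\varphi \in \Mult(\HH)$; since WOT-convergence implies pointwise convergence, this $\varphi$ coincides with the weak* limit in $\HH \odot \HH$ produced in the proof of Theorem \ref{WeakProductQuotient}. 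We therefore obtain $h = \varphi/(1-\psi)^2$ with both $\varphi$ and $(1-\psi)^2$ in $\Mult(\HH)$, and with $(1-\psi)^2$ cyclic by the previous paragraph.

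The only substantive point is the cyclicity of $1-\psi$; the passage from $1-\psi$-cyclicity to $(1-\psi)^2$-cyclicity and the subsequence extraction for the second inclusion are routine. I do not anticipate a serious obstacle beyond correctly marshalling Theorem \ref{WeakProductQuotient}, Lemma \ref{WeakProductQuotientLemma}(d), and the standard complete-Pick facts from \cite{AlHaMcCRiM}.
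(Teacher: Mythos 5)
Your overall route is the paper's route: factor $h=\varphi/(1-\psi)^2$ via Lemma \ref{WeakProductQuotientLemma} (equivalently Theorem \ref{WeakProductQuotient}), note $\varphi\in\Mult(\WPH)$ in general and $\varphi\in\Mult(\HH)$ under $M^C(\HH)\subseteq M^R(\HH)$, and conclude by cyclicity of $(1-\psi)^2$. The second inclusion is obtained in the paper more directly than in your write-up --- the single-function version of Lemma \ref{WeakProductQuotientLemma}(d) (equivalently the second half of Theorem \ref{RowColumnMultiplier}) already gives $\varphi=\Phi^R\Phi^C\in\Mult(\HH)$, so no rerun of the normalization/weak* subsequence argument from Theorem \ref{WeakProductQuotient} is needed; your detour through WOT-limits is harmless but superfluous, since the corollary does not require the sharp bound $\|\varphi\|_{\Mult(\HH)}\le\|h\|_{\WPH}$.

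There is, however, one step in your argument that is false as stated: the claim that for a contractive multiplier $\psi$ with $\psi(z_0)=0$ one has $\ell^2$-summability of $\{\psi^n f\}$ and hence $\psi^{N+1}f\to 0$ in $\HH$-norm. Take $\HH=H^2$ and $\psi(z)=z$: then $\|\psi^{N+1}f\|=\|f\|$ for all $N$, so neither the summability nor the norm convergence holds. The cyclicity of $1-\psi$ is nevertheless true --- it is exactly Lemma 2.3 of \cite{AlHaMcCRiM}, which is what the paper cites --- and your geometric-series argument can be repaired by replacing norm convergence with weak convergence: $\|\psi^{N+1}u\|\le\|u\|$ and $|\psi(z)|<1$ give $\psi^{N+1}u\to 0$ pointwise and boundedly, hence weakly, so $(1-\psi)S_N u\to u$ weakly, and since $\{\alpha(1-\psi):\alpha\in\Mult(\HH)\}$ is a subspace its weak and norm closures coincide (one also uses that $\Mult(\HH)$ is dense in $\HH$, which holds for normalized complete Pick spaces). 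Your two-step passage from cyclicity of $1-\psi$ to cyclicity of $(1-\psi)^2$ is fine. With that one repair the proposal is correct.
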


\begin{proof} Suppose the reproducing kernel is normalized at the point $z_0\in X$.
  Let $h\in \HH\odot \HH$. By Lemma \ref{WeakProductQuotientLemma} we have  $h=\frac{\varphi}{(1-\psi)^2}$ for  $ \psi\in \Mult(\HH)$, $\Phi^C\in M^C(\HH)$ with $\|\psi\|_{\Mult(\HH)} \le 1$, $\psi(z_0)=0$, and $\varphi= \Phi^R\Phi^C \in \Mult(\WPH)$ by Theorem \ref{RowColumnMultiplier}.

It now follows immediately from  Lemma 2.3 of \cite{AlHaMcCRiM} that $1-\psi$ is cyclic in $\HH$. Furthermore, it is  easy to see that products of cyclic multipliers are cyclic, hence the corollary  follows.
\end{proof}


\subsection{Multiplier Invariant subspaces}
\label{SectionMultInvsubspaces}
Recall that if $\HM $ is a closed subspace of a Banach function space $\HB$, then we say $\HM$ is multiplier invariant, if $\varphi \HM \subseteq  \HM$ for all multipliers $\varphi \in \Mult(\HB)$. If $f\in \HB$, then we write $[f]_\HB$ for the smallest multiplier invariant subspace of $\HB$ that contains $f$, i.e.
 $$[f]_\HB=\clos_{\HB}\{\varphi f: \varphi \in \Mult(\HB)\}.$$ Thus, since $\Mult(\HH) \subseteq  \Mult(\WPH)$ we have
 $$\clos_{\WPH}\{\varphi f: \varphi \in \Mult(\HH)\}\subseteq  [f]_\WPH.$$ We will start this section by showing that for complete Pick spaces $\HH$ with $M^C(\HH)\subseteq  M^R(\HH)$ these two sets are always the same.

\begin{lemma} \label{weakProdInvSubspaceLemma} Let $\HH$ be a separable Hilbert function space on a set $X$.
  If $h_1,h_2 \in \HH\odot \HH$  and $ \psi_n \in \Mult(\HH)$ with
\begin{enumerate}\item $\psi_nh_2\in \clos_{\WPH} \{uh_1 : u \in \Mult(\HH)\}$ for each $n$,
 \item $\psi_n(z)\to 1 $ for each $z\in X$ and
\item $\|\psi_n \|_{\Mult(\HH)} \le C$ for each $n$,
\end{enumerate}
then $h_2\in \clos_{\HH\odot\HH}\{uh_1: u\in \Mult(\HH)\}\subseteq  [h_1]_\WPH$.
\end{lemma}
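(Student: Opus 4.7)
Set $M = \clos_{\WPH}\{uh_1 : u \in \Mult(\HH)\}$. The strategy is to show $h_2 \in M$ by a Hahn--Banach duality argument that uses the description of $(\WPH)^*$ developed in Theorem \ref{predual} and Lemma \ref{HankelLemma}. It suffices to prove that every $L \in M^\bot \subseteq (\WPH)^*$ annihilates $h_2$. Lifting $L$ through the quotient $\rho: \HH \otimes_\pi \HH \to \WPH$, one may assume $L$ is induced by some $T \in (\ker\rho)^\bot \subseteq \HB(\HH,\Hbar)$, with
\[
L(fg) = \langle g, \overline{Tf}\rangle_\HH \quad (f,g \in \HH).
\]

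Fix a representation $h_2 = \sum_i f_i g_i$ with $\sum_i \|f_i\|\,\|g_i\| < \infty$. Since $\psi_n h_2 = \sum_i (\psi_n f_i)\, g_i$ also converges absolutely, one obtains
\[
L(\psi_n h_2) = \sum_i \langle g_i, \overline{T(\psi_n f_i)}\rangle_\HH.
\]
Applying the intertwining relation $T M_{\psi_n} = M_{\overline{\psi_n}}^* T$ from Lemma \ref{HankelLemma}(b), together with the conjugation identity $M_{\overline{\psi_n}}^*\overline{v} = \overline{M_{\psi_n}^* v}$ relating $\HH$ and $\Hbar$, this rewrites as
\[
L(\psi_n h_2) = \sum_i \langle \psi_n g_i, \overline{T f_i}\rangle_\HH,
\]
so that $\psi_n$ effectively moves from the first factor of the product to the second. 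Hypothesis (i) combined with $L \in M^\bot$ then gives $L(\psi_n h_2) = 0$ for every $n$.

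It remains to show $L(\psi_n h_2) \to L(h_2) = \sum_i \langle g_i, \overline{Tf_i}\rangle_\HH$. Hypotheses (ii) and (iii) yield $M_{\psi_n}^* k_z = \overline{\psi_n(z)}\, k_z \to k_z$ for every $z \in X$; combined with uniform boundedness $\|M_{\psi_n}\| \le C$ and the density of the span of reproducing kernels, this forces $M_{\psi_n} \to I$ in the weak operator topology on $\HH$. Hence $\psi_n g_i \to g_i$ weakly for each $i$ and each term $\langle (\psi_n - 1) g_i, \overline{T f_i}\rangle_\HH$ tends to $0$. The majorant $(1+C)\|T\|\,\|f_i\|\,\|g_i\|$ is summable, so a dominated-convergence argument in the series permits exchange of limit and summation, giving $L(h_2) = 0$. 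Thus $h_2 \in M$, and the containment $M \subseteq [h_1]_\WPH$ is immediate from $\Mult(\HH) \subseteq \Mult(\WPH)$.

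The main obstacle is the bookkeeping in the second paragraph: one has to carefully combine the Hankel intertwining with the $\HH$--$\Hbar$ conjugation to transport $\psi_n$ from the $f_i$-factor onto the $g_i$-factor, since a direct weak-$*$ argument on $\psi_n h_2 \to h_2$ (which follows from Corollary \ref{weak*convergence}) is not enough to conclude membership in the norm-closed subspace $M$. Once this symmetry is set up, the passage to the limit is a routine WOT / dominated-convergence argument, and Hahn--Banach closes the proof.
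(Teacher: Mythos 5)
Your proof is correct, but it takes a genuinely different route from the paper's. The paper argues entirely on the primal side: it forms the convex hull of $\{\psi_n\}$ in $\Mult(\HH)$, observes that (ii) and (iii) force $1$ into its WOT-closure (as multiplication operators), invokes the coincidence of the WOT- and SOT-closures of a convex set of operators to produce convex combinations $\varphi_n$ with $M_{\varphi_n}\to I$ strongly, and then checks via a dominated-convergence estimate on a representation $h_2=\sum_i f_i g_i$ that $\varphi_n h_2\to h_2$ in the \emph{norm} of $\WPH$; hypothesis (i) keeps each $\varphi_n h_2$ in the norm-closed subspace. You instead dualize: you realize every functional annihilating $M$ as an operator $T\in(\ker\rho)^\bot$, show $\psi_n h_2\to h_2$ weakly in $\WPH$ by a termwise WOT-plus-dominated-convergence argument, and finish with Hahn--Banach (equivalently, Mazur's theorem that the norm-closed subspace $M$ is weakly closed). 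Both arguments are sound and both ultimately rest on a convexity principle; yours leans on the identification $(\WPH)^*\cong(\ker\rho)^\bot$ from Section 2 but avoids the convex-combination upgrade, while the paper's is self-contained and produces explicit norm-convergent approximants. One small remark: the Hankel intertwining you use to transport $\psi_n$ from the $f_i$-factor to the $g_i$-factor is dispensable --- since $T$ is bounded it is weak-to-weak continuous, so $\langle g_i,\overline{T(\psi_n f_i)}\rangle\to\langle g_i,\overline{Tf_i}\rangle$ follows directly from $\psi_n f_i\to f_i$ weakly, with the same summable majorant $(1+C)\|T\|\,\|f_i\|\,\|g_i\|$.
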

\begin{proof}
  Let $M$ be the convex hull of $\{\psi_n : n \in \mathbb N \}$ inside of $\Mult(\HH)$.
  It follows from assumptions (ii) and (iii) that $1$ belongs to the WOT-closure of $M$.
  By convexity of $M$, there is a sequence $(\varphi_n)$ in $M$ that converges to $1$ in the strong
  operator topology of $\HH$. It is then straightforward to check that
  the sequence $(\varphi_n h_2)$ converges to $h_2$ in the norm of $\HH \odot \HH$,
  so assumption (i) implies that $h_2 \in \operatorname{clos}_{\HH \odot \HH} \{ u h_1: u \in \Mult(\HH) \}$.
\end{proof}

\begin{lemma}\label{cyclic subspaces} Let $\HH$ be a separable Hilbert function space on a set $X$, and let  $z_0\in X$.

(a) If $f, g\in \HH$,  $ \psi \in \Mult(\HH)$, $\|\psi\|_{\Mult(\HH)}\le 1$, $\psi(z_0)=0$ such that $f=\frac{g}{1-\psi}$, then $[f]_\HH=[g]_\HH$.

(b) If $h,g\in \HH \odot \HH$,   $\psi \in \Mult(\HH)$, $\|\psi\|_{\Mult(\HH)}\le 1$, $\psi(z_0)=0$ such that $h=\frac{g}{(1-\psi)^2}$,
then $h\in \clos_{\HH\odot \HH}\{ug: u \in \Mult(\HH)\}$ and $[h]_{\HH\odot \HH}=[g]_{\HH\odot \HH}$.
\end{lemma}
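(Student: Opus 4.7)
The plan is to handle (a) and (b) separately. Part (a) will come from a direct geometric-series approximation in $\HH$, while part (b) reduces cleanly to an application of Lemma \ref{weakProdInvSubspaceLemma} with a carefully chosen multiplier sequence.

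For (a), one inclusion is immediate: since $g = (1-\psi) f$ and $1-\psi \in \Mult(\HH)$, we have $g \in [f]_\HH$ and hence $[g]_\HH \subseteq [f]_\HH$. For the reverse inclusion I would approximate $f$ by $u_N g$, where $u_N = 1 + \psi + \psi^2 + \cdots + \psi^N \in \Mult(\HH)$. The geometric-series identity $(1-\psi) u_N = 1 - \psi^{N+1}$ gives $u_N g = (1 - \psi^{N+1}) f$, so the question reduces to showing $\psi^{N+1} f \to 0$ in a topology in which $[g]_\HH$ is closed. Norm convergence fails in general (take $\HH = H^2$ and $\psi(z) = z$), but weak convergence suffices since the norm-closed subspace $[g]_\HH$ is weakly closed. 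To establish weak convergence to $0$, I would combine two observations: by Lemma 2.2 of \cite{AlHaMcCRiM}, $|\psi(z)| < 1$ for every $z \in X$, so $\psi^{N+1}(z) f(z) \to 0$ pointwise; and the sequence is bounded in $\HH$ since $\|\psi\|_{\Mult(\HH)} \le 1$. Using that finite linear combinations of reproducing kernels are dense in $\HH$, any weak subsequential limit must be the zero function, so the whole sequence converges weakly to $0$, and $f \in [g]_\HH$ follows.

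For (b), the main content is the inclusion $h \in \clos_{\HH\odot\HH} \{u g : u \in \Mult(\HH)\}$; the equality $[h]_{\WPH} = [g]_{\WPH}$ then follows from this inclusion together with $g = (1-\psi)^2 h$ (using $(1-\psi)^2 \in \Mult(\HH) \subseteq \Mult(\WPH)$). I would apply Lemma \ref{weakProdInvSubspaceLemma} with $h_1 = g$, $h_2 = h$, and the multiplier sequence
\[
\psi_n := (1 - \psi^n)^2 = (1-\psi)^2 \Bigl( \sum_{k=0}^{n-1} \psi^k \Bigr)^2 \in \Mult(\HH).
\]
Hypothesis (iii) is clear since $\|\psi_n\|_{\Mult(\HH)} \le 4$, and hypothesis (ii) is immediate from $|\psi(z)|<1$. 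The crucial hypothesis (i) is built into the factorization above: setting $\chi_n = (1 + \psi + \cdots + \psi^{n-1})^2 \in \Mult(\HH)$, one has $\psi_n h = \chi_n (1-\psi)^2 h = \chi_n g$, so $\psi_n h$ lies in $\{ug : u \in \Mult(\HH)\}$ itself, not merely in its closure. The lemma then yields $h \in \clos_{\HH\odot\HH}\{ug : u\in \Mult(\HH)\}$.

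There is no serious obstacle here: the content of (a) is the passage from norm to weak convergence and the standard pointwise-plus-boundedness criterion for weak convergence in a reproducing kernel Hilbert space, and the content of (b) is simply recognizing that $(1-\psi^n)^2$ is the right choice of $\psi_n$ for Lemma \ref{weakProdInvSubspaceLemma}, precisely because it absorbs the denominator $(1-\psi)^2$ in $h$ while still tending to $1$ pointwise with a uniform multiplier-norm bound.
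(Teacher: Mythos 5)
Your overall architecture matches the paper's: the easy inclusions $[g]_\HH\subseteq[f]_\HH$ and $g=(1-\psi)^2h\in[h]_\WPH$ are obtained the same way, part (a) rests on ``uniformly bounded plus pointwise convergent implies weakly convergent'' in a reproducing kernel space, and part (b) is exactly an application of Lemma \ref{weakProdInvSubspaceLemma} with a sequence $\psi_n$ chosen to absorb the denominator $(1-\psi)^2$. The only substantive difference is the choice of approximants: you use partial geometric sums, i.e.\ $u_N=\sum_{k=0}^N\psi^k$ in (a) and $\psi_n=(1-\psi^n)^2$ in (b), whereas the paper uses the Abel-type regularization $1/(1-r\psi)$, i.e.\ $\frac{g}{1-r\psi}\to f$ weakly in (a) and $\psi_n=\bigl(\frac{1-\psi}{1-r_n\psi}\bigr)^2$ in (b).

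That choice is where the one gap lies. Both of your pointwise limits, $\psi^{N+1}(z)f(z)\to 0$ and $(1-\psi^n(z))^2\to 1$, require $|\psi(z)|<1$ for every $z\in X$, and you justify this by Lemma 2.2 of \cite{AlHaMcCRiM}. But that lemma is about normalized complete Pick kernels, while Lemma \ref{cyclic subspaces} is stated for an arbitrary separable Hilbert function space; there, $\|\psi\|_{\Mult(\HH)}\le1$ and $\psi(z_0)=0$ only give $|\psi(z)|\le1$, and at a point with, say, $\psi(z)=-1$ and $f(z)\ne0$ the sequence $\psi^{N+1}(z)f(z)$ oscillates, so no subsequence of $u_Ng$ converges weakly to $f$ (a two-point set $X$ with orthogonal kernel functions realizes this). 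The paper's resolvents only need $1-\psi(z)\ne0$, which is already implicit in the hypothesis that $g/(1-\psi)$ defines a function on $X$. So your argument is complete in every situation where the lemma is actually applied in this paper (there $\HH$ is a normalized complete Pick space and $|\psi|<1$ does hold), but not under the lemma's stated hypotheses; replacing $\sum_{k\le N}\psi^k$ by $(1-r\psi)^{-1}$ and $(1-\psi^n)^2$ by $\bigl(\frac{1-\psi}{1-r\psi}\bigr)^2$ removes the issue with no other change to your proof.
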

\begin{proof} (a) Let $f, g\in \HH$,  $ \psi \in \Mult(\HH)$, $\|\psi\|_{\Mult(\HH)}\le 1$, $\psi(z_0)=0$ such that $f=\frac{g}{1-\psi}$. Then $g=(1-\psi)f \in [f]_\HH$. Thus $[g]_\HH\subseteq  [f]_\HH$.

Let $0<r<1, $ then $1/(1-r\psi)\in \Mult(\HH)$ and $\frac{g}{1-r\psi}\in [g]_\HH$. A short calculation shows that
$\|\frac{g}{1-r\psi}-f\|=\|\frac{(1-r)\psi}{1-r\psi}f\|\le \|f\|.$
Thus, it follows that $\frac{g}{1-r\psi}$ converges weakly to $f$ as $r\to 1^-$. Hence $f\in [g]_\HH$. This proves (a).

(b) Let $h, g\in \HH \odot \HH$,  $\psi \in \Mult(\HH)$, $\|\psi\|_{\Mult(\HH)}\le 1$, $\psi(z_0)=0$ such that $h=\frac{g}{(1-\psi)^2}$. As in (a) the inclusion $g=(1-\psi)^2h\in [h]_\WPH$ is trivial. For $0<r<1$ we have $\|\frac{1-\psi}{1-r\psi}\|_{\Mult(\HH)}=\|1-\frac{(1-r)\psi}{1-r\psi}\|_{\Mult(\HH)}\le 2$, hence (b) now follows from Lemma \ref{weakProdInvSubspaceLemma} with $\psi_n=\left(\frac{1-\psi}{1-r_n\psi}\right)^2$ for $r_n\to 1^-$ since $\psi_nh= \frac{g}{(1-r_n\psi)^2}$ and $\frac{1}{(1-r_n\psi)^2}\in \Mult(\HH)$ for each $n$.
\end{proof}

\begin{theorem} \label{InvSub}
Let $\HH$ be a separable Hilbert function space on the non-empty set $X$, and suppose that  the reproducing kernel for $\HH$ is a complete Pick kernel, which is normalized at a point $z_0\in X$.

(a) Then
 $\HM\cap \Mult(\WPH)$ is dense in $\HM$ for every multiplier invariant subspace $\HM$ of $\WPH$.
 If $\Mult(\WPH)$ has no zero-divisors, the lattice of  multiplier invariant subspaces of $\WPH$ is cellularly indecomposable, i.e. whenever $\HM, \HN$ are such invariant subspaces with $\HM \ne (0), \HN \ne (0)$, then $\HM\cap \HN \ne (0)$.

(b) If additionally $M^C(\HH)\subseteq  M^R(\HH)$, then  every $\Mult(\HH)$-invariant subspace of $\HH\odot \HH$ is $\Mult(\HH\odot \HH)$-invariant and the map \begin{align*}&\eta:\HN \to \clos_{\WPH} \HN\end{align*}  establishes a  1-1 and onto correspondence between the multiplier invariant subspaces of $\HH$ and of $\WPH$. We have
\begin{enumerate}
\item $\HM=\clos_{\WPH}(\HM\cap \Mult(\HH))$ for every multiplier invariant subspace $\HM$ of $\WPH$, and
\item $\HN=\HH \cap \clos_{\WPH} \HN$ for every multiplier invariant subspace $\HN$ of $\HH$.
\end{enumerate}
\end{theorem}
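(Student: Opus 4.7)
The plan is to handle both parts by combining the Smirnov-type factorizations from Theorem \ref{WeakProductQuotient} and Lemma \ref{WeakProductQuotientLemma} with the invariance properties given by Lemma \ref{cyclic subspaces}. The guiding idea is that the factorization $h = \varphi/(1-\psi)^2$ trades an arbitrary element $h \in \WPH$ for a multiplier $\varphi = (1-\psi)^2 h$ that belongs to the same invariant subspace as $h$, and that Lemma \ref{cyclic subspaces}(b) then recovers $h$ as a $\WPH$-limit of $\Mult(\HH)$-multiples of $\varphi$.

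For part (a), let $\HM \subseteq \WPH$ be $\Mult(\WPH)$-invariant and let $h \in \HM$. I would apply Theorem \ref{WeakProductQuotient} to write $h = \varphi/(1-\psi)^2$ with $\varphi \in \Mult(\WPH)$ and $\psi \in \Mult(\HH)$, $\psi(z_0)=0$, $\|\psi\|_{\Mult(\HH)} \le 1$. Since $(1-\psi)^2 \in \Mult(\HH) \subseteq \Mult(\WPH)$, the element $\varphi = (1-\psi)^2 h$ lies in $\HM \cap \Mult(\WPH)$. Lemma \ref{cyclic subspaces}(b) then places $h$ in $\clos_\WPH\{u\varphi : u \in \Mult(\HH)\}$, and each $u\varphi$ also lies in $\HM \cap \Mult(\WPH)$, giving the density. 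For cellular indecomposability, nonzero invariant subspaces $\HM, \HN$ each contain nonzero multipliers $\varphi_1, \varphi_2$; when $\Mult(\WPH)$ has no zero divisors, $\varphi_1 \varphi_2$ is a nonzero element of $\HM \cap \HN$.

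For part (b) I would work throughout with the sharper factorization supplied by Lemma \ref{WeakProductQuotientLemma}(d), which under the hypothesis $M^C(\HH) \subseteq M^R(\HH)$ promotes the numerator $\varphi$ to an element of $\Mult(\HH)$. Rerunning the argument of part (a) with this strengthening shows that $\HM \cap \Mult(\HH)$ is dense in every $\Mult(\HH)$-invariant closed $\HM \subseteq \WPH$, which is (1). To upgrade $\Mult(\HH)$-invariance of such $\HM$ to $\Mult(\WPH)$-invariance, I would factor $\varphi_0 \in \Mult(\WPH)$ as $\varphi_0 = \varphi'/(1-\psi')^2$ with $\varphi', \psi' \in \Mult(\HH)$; for any $g \in \HM \cap \Mult(\HH)$ one has $\varphi_0 g = (\varphi' g)/(1-\psi')^2$ with $\varphi' g \in \HM \cap \Mult(\HH)$, so Lemma \ref{cyclic subspaces}(b) places $\varphi_0 g$ in the $\WPH$-closure of $\{u \varphi' g : u \in \Mult(\HH)\} \subseteq \HM$, giving $\varphi_0 g \in \HM$. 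Approximating an arbitrary $h \in \HM$ by such $g$ finishes this step.

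The main obstacle is statement (2), namely $\HN = \HH \cap \clos_\WPH \HN$. Given $f \in \HH \cap \clos_\WPH \HN$, the Smirnov factorization in $\HH$ combined with Lemma \ref{cyclic subspaces}(a) reduces (2) to showing that $(\clos_\WPH \HN) \cap \Mult(\HH) \subseteq \HN$. For $\varphi$ in this intersection I would select a subsequence $h_n \in \HN$ with $h_n \to \varphi$ in $\WPH$ and $\sum_n \|h_n - \varphi\|_\WPH < 1$, and apply the multi-function Lemma \ref{WeakProductQuotientLemma}(d) to $\{h_n - \varphi\}$ to produce $\psi \in \Mult(\HH)$ with $\psi(z_0) = 0$ and $\varphi_n \in \Mult(\HH)$ such that $h_n - \varphi = \varphi_n/(1-\psi)^2$ and $\sum_n \|\varphi_n f\|_\HH^2 \le c \|f\|_\HH^2$ for every $f \in \HH$. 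Specializing $f = 1$, which lies in $\HH$ because the kernel is normalized at $z_0$, forces $\|\varphi_n\|_\HH \to 0$, equivalently $(1-\psi)^2 h_n \to (1-\psi)^2 \varphi$ in the $\HH$-norm. Since each $(1-\psi)^2 h_n \in \HN$ and $\HN$ is $\HH$-closed, $(1-\psi)^2 \varphi \in \HN$, and two applications of Lemma \ref{cyclic subspaces}(a) yield $[\varphi]_\HH = [(1-\psi)^2 \varphi]_\HH \subseteq \HN$. Bijectivity of $\eta$ then follows routinely: injectivity from (2), surjectivity by assigning $\HN := \HH \cap \HM$ and using (1) together with $\HM \cap \Mult(\HH) \subseteq \HH \cap \HM$.
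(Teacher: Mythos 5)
Your proposal is correct and follows essentially the same route as the paper: both parts rest on the factorization $h=\varphi/(1-\psi)^2$ supplied by Lemma \ref{WeakProductQuotientLemma} (with the numerator promoted to $\Mult(\HH)$ under $M^C(\HH)\subseteq M^R(\HH)$), combined with the invariance statements of Lemma \ref{cyclic subspaces}. The only place where your details diverge is statement (ii): the paper telescopes, setting $h_n=f_{n+1}-f_n$, and uses the uniform bound $\|\sum_{n=1}^N\varphi_n\|_{\Mult(\HH)}\le c$ from Lemma \ref{WeakProductQuotientLemma}(d) to get \emph{weak} convergence in $\HH$ of the partial sums $(1-\psi)^2(f_{N+1}-f_1)\in\HN$, whereas you apply the lemma to the differences $h_n-\varphi$ and use the square-summability $\sum_n\|\varphi_n\cdot 1\|_{\HH}^2<\infty$ to get \emph{norm} convergence $(1-\psi)^2h_n\to(1-\psi)^2\varphi$. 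Both conclusions are drawn from part (d) of the same lemma, so this is a cosmetic variant rather than a new idea; if anything, norm convergence is slightly cleaner. Note also that your preliminary reduction of (ii) to elements of $(\clos_{\WPH}\HN)\cap\Mult(\HH)$ via the scalar Smirnov factorization is unnecessary (though harmless): your convergence argument applies verbatim with an arbitrary $f\in\HH\cap\clos_{\WPH}\HN$ in place of $\varphi$, exactly as the paper does.
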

It is clear from (i) and (ii) that   $\eta^{-1}(\HM)=\HH\cap \HM$. Furthermore, it is easy to see that $\eta$ preserves spans and intersections.
 We note that it follows from Corollary 2.7 of \cite{DavidsonRamseyShalit2015} and Corollary 5.3 of \cite{DavidsonHamilton2011} that for a normalized complete Pick kernel the weak* closed ideals of $\Mult(\HH)$ are in 1-1 and onto correspondence with the multiplier invariant subspaces of $\HH$. An alternate proof of this fact is in \cite{AlRiSunkesCyclicVector}, and the current proof of (ii) is inspired by that approach.

\begin{proof}
(a) Let $\HM$ be a multiplier invariant subspace of $\WPH$, and let $h\in \HM$. Then by Lemma \ref{WeakProductQuotientLemma} $h=\varphi/(1-\psi)^2$ for some $\varphi\in \Mult(\WPH)$ and $\psi\in \Mult(\HH)$ with $\psi(z_0)=0$. Then by Lemma \ref{cyclic subspaces} we have $h\in [\varphi]_\WPH=[h]_\WPH\subseteq  \HM$ and hence there is a sequence $u_n \in \Mult(\WPH)$ such that $u_n\varphi\to h$. Clearly $u_n\varphi\in \HM\cap \Mult(\WPH)$. This proves the first part of (a) and the second part of (a) easily follows from this.

(b) Now suppose that
 $M^C(\HH)\subseteq  M^R(\HH)$. In order to show that  every $\Mult(\HH)$-invariant subspace of $\HH\odot \HH$ is $\Mult(\HH\odot \HH)$-invariant it suffices to   take $h\in \HH\odot \HH$ and  $u\in \Mult(\HH\odot \HH)$ and show that $uh\in \clos_{\HH\odot \HH} \{vh:v\in \Mult(\HH)\}$.

 Since the reproducing kernel of $\HH$ is normalized, $\HH$ contains the constant functions, so we must have $u\in \HH\odot \HH$ and hence by the hypothesis and Lemma \ref{WeakProductQuotientLemma} $u=\frac{\varphi}{(1-\psi)^2}$ for some $\varphi, \psi\in \Mult(\HH)$,  $\|\psi\|_{\Mult(\HH)}\le 1$ and $\psi(z_0)=0$. Then $uh= \frac{\varphi h}{(1-\psi)^2}$ and Lemma \ref{cyclic subspaces} (b) implies that
 $$uh \in \clos_\WPH \{v\varphi h: v\in \Mult(\HH)\}\subseteq  \clos_\WPH \{v h: v\in \Mult(\HH)\}.$$ This establishes the first part of (b). Furthermore, we note that if $\HM$ is a  multiplier invariant subspace of $\WPH$, then $\HH \cap \HM$ is a multiplier invariant subspace of $\HH$ and
 $$\clos_{\WPH}(\HM\cap \Mult(\HH)) \subseteq  \clos_{\WPH}(\HM\cap \HH) \subseteq  \HM.$$ Thus, statement (i) will show that $\eta$ is onto and statement (ii) will show it is 1-1.

 (i) The fact that  $\HM\cap \Mult(\HH)$ is dense in $\HM$ for every multiplier invariant subspace $\HM$ of $\WPH$ follows as in (a), except that now any $h\in \WPH$ is of the form $h=\frac{\varphi}{(1-\psi)^2}$ with $\varphi\in \Mult(\HH)$. This proves (i).

 (ii) Let $\HN$ be a multiplier invariant subspace of $\HH$, we have to show that $\HH \cap \clos_{\WPH} \HN \subseteq  \HN$. To this end let $f_n \in \HN$ and $ f\in \HH$ with $f_n\to f$ in $\WPH$. We have to show that $f\in \HN$. By possibly considering a subsequence we may assume that $\sum_{n\ge 1}\|f_{n+1}-f_n\|_{\WPH} <1$. Now we apply Lemma \ref{WeakProductQuotientLemma} with $h_n=f_{n+1}-f_n$. Thus there are $\psi \in \Mult(\HH)$ and $\{\Phi_n^C\}_{n\ge 1} \subseteq  M^C(\HH)$ such that

 (a)  $\|\psi\|_{\Mult(\HH)}\le 1$ and $\psi(z_0)=0$,

 (b)  $\sum_{n\ge 1}\|\Phi_n^Cu\|^2_{\HH\otimes \ell_2}\le \|u\|_\HH^2$ for all $u \in \HH$,

 (c) for each $n \in \N$ $h_n=\frac{\varphi_n}{(1-\psi)^2}$  with $\varphi_n\in \Mult( \HH)$ and
 $$\|\sum_{n= 1}^N \varphi_n\|_{\Mult(\HH)}\le c$$ for each $N\in \N$.

 Set $g_1=(1-\psi)(f-f_1)$ and $g_2=(1-\psi)g_1$. Then $g_1,g_2\in \HH$ and by Lemma \ref{cyclic subspaces} (a) it suffices to prove that $g_2\in \HN$. But $g_2=\sum_{n\ge 1} \varphi_n$ with $\sum_{n=1}^N\varphi_n=\sum_{n=1}^N(1-\psi)^2(f_{n+1}-f_n)\in \HN$ for each $N$. Since $1\in \HH$ condition (c) from above implies that the partial sums $\sum_{n=1}^N\varphi_n$ converge weakly in $\HH$ to $g_2$. Thus $g_2\in \HN$.
\end{proof}
The following Corollary was known for the Dirichlet space $D$ of the unit disc and Drury-Arveson space $H^2_d$ of the finite dimensional ball $\Bd$, see \cite{LuoRi}, \cite{RiSunkes}.
\begin{corollary}\label{HankelIntersection} Let $\HH$ be a separable Hilbert function space on the non-empty set $X$, and suppose that  the reproducing kernel for $\HH$ is a complete Pick kernel, which is normalized at a point $z_0\in X$.

If  $M^C(\HH)\subseteq  M^R(\HH)$, then for every multiplier invariant subspace $\HM$ of $\HH$, there is a sequence $\{b_n\}$ of symbols of bounded Hankel operators such that $$\HM=\bigcap_n \ker H_{b_n}.$$
\end{corollary}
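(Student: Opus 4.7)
The plan is to combine Theorem \ref{InvSub}(b)(ii), which asserts $\HM = \HH \cap \clos_{\WPH} \HM$, with the duality $(\WPH)^* \cong \Han(\HH)$ from Theorem \ref{theo:han_basic} to obtain a description $\HM = \bigcap_{b \in S} \ker H_b$ over a (possibly uncountable) index set $S$, and then to trim $S$ down to a countable subcollection using that $\HH$ is separable.

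Concretely, I would set
\[
  S = \{ b \in \Han(\HH) : L_b|_{\HM} = 0 \}
\]
and first show $\HM = \bigcap_{b \in S} \ker H_b$. For the inclusion $\HM \subseteq \bigcap_{b \in S} \ker H_b$, multiplier invariance of $\HM$ gives $\varphi f \in \HM$ for every $f \in \HM$ and $\varphi \in \Mult(\HH)$; the identity $L_b(\varphi f) = \la \varphi, \overline{H_b f} \ra$ from Theorem \ref{theo:han_basic}, together with density of $\Mult(\HH)$ in $\HH$, then forces $H_b f = 0$ whenever $b \in S$. For the reverse inclusion, if $H_b f = 0$ for every $b \in S$, then in particular $L_b(f) = L_b(1 \cdot f) = 0$ for every $b \in S$, so by Hahn-Banach in $\WPH$ we have $f \in \bigcap_{b \in S} \ker L_b = \clos_{\WPH} \HM$; combined with $f \in \HH$ and Theorem \ref{InvSub}(b)(ii), this forces $f \in \HM$. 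This is the conceptual core of the argument, and the step I expect to be the main obstacle: it uses both the standing hypothesis $M^C(\HH) \subseteq M^R(\HH)$ (through Theorem \ref{InvSub}(b)(ii)) and the multiplier invariance of $\HM$ in an essential way.

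To extract a countable family $\{b_n\}$ I would then pass to orthogonal complements inside the separable Hilbert space $\HH$. Each $\ker H_b$ is closed, hence
\[
  \HM^\perp = \overline{ \operatorname{span} \bigcup_{b \in S} (\ker H_b)^\perp }.
\]
Because $\HH$ is separable, so is $\HM^\perp$; approximating a countable dense set in $\HM^\perp$ by finite linear combinations of elements from the right hand side and collecting the finitely many symbols $b$ used in each approximation produces a countable $\{b_n\} \subseteq S$ with $\overline{ \operatorname{span} \bigcup_n (\ker H_{b_n})^\perp } = \HM^\perp$. Taking perpendiculars once more gives $\HM = \bigcap_n \ker H_{b_n}$, and this countable reduction adds no real difficulty.
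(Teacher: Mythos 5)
Your proposal is correct and follows essentially the same route as the paper: your set $S$ is exactly the annihilator of $\clos_{\WPH}\HM$ in $\Han(\HH)$, the forward inclusion uses multiplier invariance plus density of $\Mult(\HH)$ in $\HH$, the reverse inclusion uses Hahn--Banach together with Theorem \ref{InvSub}(b)(ii), and separability handles the countable reduction, just as in the paper. The only cosmetic difference is that the paper extracts the countable family by choosing symbols $b_n$ dense in $\HM^\perp$ (using $b_n = H_{b_n}^*1 \in \operatorname{ran} H_{b_n}^*$), whereas you pass through $(\ker H_b)^\perp$; both are routine.
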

\begin{proof} Since we are assuming that $\HH$ has a normalized complete Pick kernel it follows that $\Mult(\HH)$ is dense in $\HH$. Thus, by Theorem \ref{theo:han_basic} the dual of $\WPH$ can be identified with $\Han(\HH)$, the set of symbols of bounded Hankel operators $\HH\to \overline{\HH}$. The duality is given by the inner product of $\HH$, and we have
$$\la \varphi f, b\ra = \la \varphi, \overline{H_b f}\ra \ \text{ for all }f\in \HH, \varphi\in \Mult(\HH), b \in \Han(\HH).$$

Let $\HN=\clos_{\WPH} \HM\subseteq  \WPH$ and consider the annihilator $\HN^\perp$ of $\HN$ in $\Han(\HH)$. Since $\Han(\HH)\subseteq  \HH$ and because of the particular form of the duality, it is easy to see that $\HN^\perp \subseteq  \HM^\perp$. Furthermore, if $f\in \HH$ with $f\perp \HN^\perp$, then using again the particular form of the duality
and the Hahn-Banach theorem, we find that $f \in \HN$, thus $f\in \HN \cap \HH=\HM$ by Theorem \ref{InvSub}. Hence $\HN^\perp $ is dense in $\HM^\perp$ in the topology of $\HH$,
and hence there is a countable set $\{b_n\}\subseteq  \HN^\perp$ such that $\{b_n\}$ is dense in $\HM^\perp$.

We claim that $\HM=\bigcap_n \ker H_{b_n}.$ If $f\in \HM$, then for each $\varphi\in \Mult(\HH)$ we have $\la \varphi, \overline{H_{b_n}f}\ra=\la \varphi f, b_n\ra =0$ for each $n$ since $b_n \in \HN^\perp \subseteq  \HM^\perp$. Thus $H_{b_n}f=0$ for each $n$, and hence $\HM\subseteq  \bigcap_n \ker H_{b_n}.$

Note that $H^*_b 1=b$ for every $b\in \Han(\HH)$. Thus,  by the choice of the $b_n$'s $$\HM^\perp =  \bigvee_{n} \{b_n\}\subseteq  \bigvee_{n} \ran H_{b_n}^*=\left(\bigcap_{n} \ker H_{b_n}\right)^\perp.$$ This concludes the proof of the Corollary.
\end{proof}

\section{Column operators between weighted Besov spaces}
\label{SectionWeightedBesov}
\subsection{Multiplier estimates for weighted Besov spaces}

Let $\om$ be an arbitrary admissible  weight, and let $N\in \N$. The admissibility of $\om$ implies that $L^2_a(\om)=B^0_\om$ is a Hilbert space of analytic functions on $\Bd$.
By  use of the identity $f(z)= f(0)+\int_0^1 Rf(tz) \frac{dt}{t}$ one shows that there is an absolute constant $C>0$ such that $|f(z)|\le |f(0)|+ C \sup_{\lambda\in \C,|\lambda|\le 1}|Rf(\lambda z)|$ for any $f\in \Hol(\Bd)$. With this estimate one easily establishes that  each $B^N_\om$ is also a Hilbert function space on  $\Bd$, whenever $\om$ is admissible.

Then in order to check whether an analytic function $\varphi\in \Mult(B^N_\om)$ we must check that there is $C>0$ with $\int_{\Bd} |R^N(\varphi f)|^2 \om dV\le C\|f\|^2_{B^N_\om}$ for all $f\in B^N_\om$. By the Leibnitz rule for the $n$th derivative of a product and the triangle inequality we have
\begin{align} \label{LeibnitzInequality}\int_{\Bd} |R^N(\varphi f)|^2 \om dV\le c \sum_{k=0}^N \int_{\Bd}|(R^k\varphi)R^{N-k}f|^2 \om dV. \end{align}

 For standard weights $\om$ and for so-called Bekoll\'{e}-Bonami weights it has been shown in \cite{OrtFab} and \cite{CasFab_Bekolleweights} that the right hand side of this is bounded by $c\|f\|^2_{B^N_\om}$, if and only if the terms of the sum corresponding to $k=0$  and $k=N$ are bounded by $c\|f\|^2_{B^N_\om}$, and that these two conditions together characterize $\Mult(B^N_\om)$. Note that these conditions can be equivalently expressed as $\varphi\in H^\infty$ and  $|R^N\varphi|^2 \om dV$ is a $B^N_\om$-Carleson measure. We show in \cite{AlHaMcCRiWeightedB} that the same is true for all admissible radial weights. In fact, it is a rather short argument that shows that
  a bound on the left hand side of (\ref{LeibnitzInequality}) implies a bound on the right hand side of (\ref{LeibnitzInequality}), and this argument is valid  for all  weighted Besov spaces $B^N_\om$ that satisfy  a  scalar version of the multiplier inclusion condition (see Definition  \ref{IntroMultInclCond}).  It turns out that the vector-valued versions of these results are true as well, and that will be an important ingredient in the proof of Theorem \ref{RowColumnOp}. We start by setting up the notation that we will use. A part of this involves extending the definitions given in the Introduction to multipliers between different spaces.

If $\HE$ is a separable Hilbert space and if $\HH$ is a reproducing kernel Hilbert space on $\Bd$, then we will identify $\HH\otimes \HE$ with a space $\HH(\HE)$ of $\HE$-valued functions on $\Bd$, where the identification is given by $f(z)x\cong f(z) \otimes x$ for $f\in \HH$ and $x\in \HE$. We will use the notations $\HH\otimes \HE$ and $\HH(\HE)$ interchangeably.

Let  $\HH$ and $\HK$ be Hilbert spaces of analytic functions.  We will write
 $$\Mult(\HH,\HK)=\{\varphi: \varphi \HH \subseteq  \HK\}.$$
Then any sequence $\Phi=\{\varphi_1, \varphi_2,...\}\subseteq  \Mult(\HH, \HK)$ of multipliers can be used to define a column operator  $\Phi^C: h\to (\varphi_1h, \varphi_2 h,...)^T$ and a row operator $\Phi^R:(h_1,h_2,...)^T \to \sum_{i\ge 1}\varphi_ih_i$. Here we have used $(h_1,...)^T$ to denote a transpose of a row vector. We write $M^C(\HH,\HK)$  for the set of bounded column multiplication operators $\HH \to \bigoplus_{n=1}^\infty \HK$ and $M^R(\HH,\HK)$ for the set of bounded row multiplication operators $\bigoplus_{n=1}^\infty \HH \to \HK$. Thus $\Phi^C\in M^C(\HH,\HK)$ if and only if there is a $c>0$ such that $$\sum_{j=1}^\infty \|\varphi_j h\|^2_{\HK} \le c\|h\|^2_{\HH} \text{ for all }h \in \HH,$$ and $\Phi^R\in M^R(\HH,\HK)$ if and only if there is a $c>0$ such that $$\|\sum_{j=1}^\infty \varphi_j h_j\|^2_{\HK} \le c\sum_{j=1}^\infty \|h_j\|^2_{\HH} \text{ for all }h_j \in \HH.$$ We will write $\|\Phi^C\|_{(\HH,\HK)}$ and $\|\Phi^R\|_{(\HH,\HK)}$ for the norms of these operators. Note that by considering the components of $\Phi \in \Mult(\HH,\HK(\ell_2))$ with respect to the standard orthonormal basis of $\ell_2$, we obtain an  identification between $M^C(\HH,\HK)$ and $\Mult(\HH,\HK(\ell_2))$. In the remainder of this paper we will frequently pass back and forth between these different viewpoints. We just need to remember that when we are given $\Phi\in \Mult(\HH,\HK(\ell_2))$ and we want to consider a row operator induced by $\Phi$ that we have to fix a particular orthonormal basis.

We will now set up the  framework for the spaces for which our results hold. Recall from the Introduction that
a Hilbert  space $\HH$ of functions on $\Bd$ will be called a {\it weighted Besov space}, if there is an admissible weight $\om$ on $\Bd$ and a nonnegative integer $N$ such that $\HH=B^N_\om$ with equivalence of norms. Note that it is possible for a weighted Besov space to have $\HH=B^N_\om=B^K_{\tilde{\om}}$ for $N\ne K$ and $\om \ne \tilde{\om}$. In fact, in \cite{AlHaMcCRiWeightedB} we show that for each admissible radial weight $\om$ there is a one parameter family $\{\om_s\}_{s\ge 0}$ of admissible weights such that  $B^N_\om= B^{N-s}_{\om_s}$ for all $s\ge 0$.

 Since the weight function $\om$ is integrable it follows that $H^\infty(\Bd)\subseteq  B^0_\om$, and hence any  weighted Besov space contains the polynomials. This implies in particular that $k_z(z) \ne 0$ for all $z\in \Bd$, whenever $k$ is the reproducing kernel of any weighted Besov space.

Let $\HH$ and $\HK$ be weighted Besov spaces. We say that the pair $(\HH,\HK)$ satisfies the {\it multiplier inclusion condition}, if there are admissible weights
 $\om$ and $\tilde{\om}$  and $N\in \N$ such that $\HH=B^N_\om$, $\HK=B^N_{\tilde{\om}}$ and
$$\Mult(B^N_{\om},B^N_{\tilde{\om}}(\ell_2))\subseteq  \Mult(B^{N-1}_{\om},B^{N-1}_{\tilde{\om}}(\ell_2))\subseteq  \cdots \subseteq  \Mult(B^0_{\om},B^0_{\tilde{\om}}(\ell_2)),$$ and if the inclusions are continuous, i.e. whenever $ 1\le n \le N,$ then  there is a $c>0$ such that
 for all $ \Phi\in \Mult(B^n_{\om},B^n_{\tilde{\om}}(\ell_2))$ we have
 \begin{align}\label{3.3}  \|\Phi\|_{\Mult(B^{n-1}_{\om},B^{n-1}_{\tilde{\om}}(\ell_2))} \le c\|\Phi\|_{\Mult(B^n_{\om},B^n_{\tilde{\om}}(\ell_2))}.
\end{align}

We will say that the pair $(\HH,\HK)$ satisfies the scalar multiplier inclusion condition, if there are admissible weights
 $\om$ and $\tilde{\om}$  and $N\in \N$ such that $\HH=B^N_\om$, $\HK=B^N_{\tilde{\om}}$ and
$$\Mult(B^N_{\om},B^N_{\tilde{\om}})\subseteq  \Mult(B^{N-1}_{\om},B^{N-1}_{\tilde{\om}})\subseteq  \cdots \subseteq  \Mult(B^0_{\om},B^0_{\tilde{\om}})$$ with continuous inclusions.

We mentioned in the Introduction that a well-known approach to verify that a weighted Besov space satisfies the multiplier inclusion condition uses the functorial property of the complex interpolation method. At this point we will also indicate a somewhat more elementary method that can be used to verify that a pair $(\HH,\HK)$ satisfies the multiplier inclusion condition (\ref{3.3}). This method works sometimes, when simple formulas for the reproducing kernels of the spaces are known. We will prove that $(B^s_{\mathbf{1}}, B^t_{\mathbf{1}})$ satisfies
the multiplier inclusion condition, whenever $t\le s <(d+1)/2$. This includes the interesting case where $t=s=d/2$ and the spaces both equal the Drury-Arveson space $H^2_d$.

For real values of $s<(d+1)/2$ the function $k^s_w(z)=\frac{1}{(1-\la z,w\ra)^{d+1-2s}}$ defines a reproducing kernel on $\Bd$. Let $\HH_s$ be the unique Hilbert function space with reproducing kernel $k^s$. So, for example $\HH_{d/2}=H^2_d$, $H_{1/2}=H^2(\dB)$, and $\HH_0=L^2_a(1)$. Then a routine calculation of the norms of functions in $\HH_s$ shows that for $f\in \HH_s$ one has
\begin{equation}\label{derivative} \|f\|_{\HH_s}^2 \approx |f(0)|^2 + \|Rf\|^2_{\HH_{s-1}}, \end{equation} and also $\HH_s=B^s_{\mathbf{1}}$ with equivalence of norms, see e.g. \cite{RiSunkes}, Section 2. Furthermore, if $s<1/2$, then Theorem 2.7 of \cite{ZhuBall} states that
$\HH_s=L^2_a((1-|z|^2)^{-2s})$ with equivalence of norms.

Now fix $t\le s <(d+1)/2$. Choose  a positive integer $N >s-1/2$ and set $\om(z)= (1-|z|^2)^{2(N-s)}$ and $\tilde{\om}(z)= (1-|z|^2)^{2(N-t)}$. Then $\om, \tilde{\om}$ are admissible weights and since $s-N<1/2$ we have $B^{0}_\om=\HH_{s-N}$. Then by (\ref{derivative}) we conclude that for integers $k$ with $0 \le k \le N$ we have $B^k_\om= \HH_{s-N+k}$ with equivalence of norms. Similarly, $B^k_{\tilde{\om}}= \HH_{t-N+k}$, whenever  $0 \le k \le N$. Then we must show the continuous inclusions
$$\Mult(\HH_{s-k},\HH_{t-k}(\ell_2)) \subseteq \Mult(\HH_{s-k-1},\HH_{t-k-1}(\ell_2)), \ k= 0, 1, ..N-1.$$

 We establish the scalar version $$\Mult(\HH_{s-k},\HH_{t-k}) \subseteq \Mult(\HH_{s-k-1},\HH_{t-k-1}), \ k= 0, 1, ..N-1. $$ By substituting $s$ for $s-k$ and $t$ for $t-k$ we see that it is sufficient to prove
 $$\Mult(\HH_{s},\HH_{t}) \subseteq \Mult(\HH_{s-1},\HH_{t-1}) \ \text{ whenever } t \le s<(d+1)/2. $$

  Let $\varphi \in \Mult(\HH_s,\HH_t)$ of multiplier norm $\le 1$. Then $k^t_w(z)-\varphi(z)\overline{\varphi(w)}k^s_w(z)$ is positive definite. Note that $(1-\la z,w\ra)^{-2}$ is positive definite and $(1-\la z,w\ra)^{-2}k^s_w(z)=k^{s-1}_w(z)$ and $(1-\la z,w\ra)^{-2}k^t_w(z)=k^{t-1}_w(z)$. Thus by the Schur product theorem we can conclude that $ k^{t-1}_w(z)-\varphi(z)\overline{\varphi(w)}k^{s-1}_w(z)$ is positive definite. That is equivalent to saying that $\varphi$ is a contractive multiplier from $\HH_{s-1}$ into $\HH_{t-1}$.

  Thus the scalar  multiplier inclusion condition (\ref{3.3}) holds, and the column vector version
 can be shown similarly.

 The following lemma says that the multiplier inclusion condition (\ref{3.3}) implies that given a bound on the left hand side of   inequality (\ref{LeibnitzInequality}) one also has a bound on the right hand side of that inequality.
 \begin{lemma} \label{Leibnitz estimate}  Let $\om$ and $\tilde{\om}$ be admissible weights, let $N\in \N$, and suppose that there is $c>0$ such that $(B^N_\om,B^N_{\tilde{\om}})$ satisfies the multiplier inclusion condition
 \begin{align*}  \|\Phi\|_{\Mult(B^{n-1}_{\om},B^{n-1}_{\tilde{\om}}(\ell_2))} \le c\|\Phi\|_{\Mult(B^n_{\om},B^n_{\tilde{\om}}(\ell_2))}\end{align*} for all $ \Phi\in \Mult(B^n_{\om},B^n_{\tilde{\om}}(\ell_2))$ and $ 1\le n\le N$.

 Then there is a $C>0$ such that whenever $\Phi= \{\varphi_i\}_{i\in \N}\in M^C(B^N_\om,B^N_{\tilde{\om}})$ satisfies $\|\Phi^C\|_{(B^N_\om,B^N_{\tilde{\om}})}\le 1$,
then for all  integers $j,k\ge 0$ with $j+k\le N$ and each $h\in \mathcal B^N_\om$  we have
$$\sum_{i=1}^\infty \|(R^j\varphi_i)R^{k}h\|^2_{B^{N-(j+k)}_{\tilde{\om}}}\le C \|h\|^2_{B^N_\om}.$$
\end{lemma}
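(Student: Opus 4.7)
The plan is to prove the lemma by induction on $j$, the number of derivatives falling on $\varphi_i$, exploiting that $R$ is a derivation. Throughout I use the multiplier inclusion condition (\ref{3.3}), iterated, in the equivalent form that the column norm $\|\Phi^C\|_{M^C(B^n_\om,B^n_{\tilde\om})}$ is bounded by a constant depending only on $N$ and the constants from (\ref{3.3}), uniformly in $0\le n\le N$ (normalizing via $\|\Phi^C\|_{(B^N_\om,B^N_{\tilde\om})}\le 1$).

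For the base case $j=0$, since $R^k$ kills all homogeneous components of degree $<k$, one has $(R^kh)(0)=0$ when $k\ge 1$ and $R^{N-k}(R^kh)=R^Nh$; reading off the definition of $\|\cdot\|_{B^{N-k}_\om}$ then gives $\|R^kh\|_{B^{N-k}_\om}\le \|h\|_{B^N_\om}$. Applying the multiplier inclusion at level $N-k$,
\[
\sum_{i=1}^\infty \|\varphi_i R^kh\|^2_{B^{N-k}_{\tilde\om}}\le c\,\|R^kh\|^2_{B^{N-k}_\om}\le c\,\|h\|^2_{B^N_\om}.
\]

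For the inductive step, the Leibnitz identity, which is just the fact that $R$ is a derivation,
\[
(R^{j+1}\varphi_i)(R^kh)=R\bigl((R^j\varphi_i)(R^kh)\bigr)-(R^j\varphi_i)(R^{k+1}h),
\]
trades a derivative on $\varphi_i$ for either a $R$ outside or a derivative on $h$. The second term already falls under the inductive hypothesis at index pair $(j,k+1)$, since $j+(k+1)\le N$. For the first term, the elementary bound
\[
\|Rg\|_{B^m_{\tilde\om}}\le \|g\|_{B^{m+1}_{\tilde\om}}\qquad (g\in\Hol(\Bd),\ m\ge 0),
\]
which is immediate from $(Rg)(0)=0$ and $R^m(Rg)=R^{m+1}g$ on the homogeneous decomposition, yields $\|R((R^j\varphi_i)(R^kh))\|_{B^{N-j-k-1}_{\tilde\om}}\le \|(R^j\varphi_i)(R^kh)\|_{B^{N-j-k}_{\tilde\om}}$, bringing us under the inductive hypothesis at the pair $(j,k)$. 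Using $(a+b)^2\le 2(a^2+b^2)$ and summing in $i$ closes the induction, the constant growing by a bounded factor at each of the at most $N$ steps.

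The whole argument is routine Leibnitz bookkeeping once (\ref{3.3}) is available; the sole substantive input is the column multiplier inclusion, which is used in the base case to transfer $\Phi^C$ acting on $B^N_\om$ to $\Phi^C$ acting on $B^{N-k}_\om$. I do not anticipate any technical obstacle beyond checking that the constants stay uniform across the finitely many reductions, which is automatic from the continuity in (\ref{3.3}).
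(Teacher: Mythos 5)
Your proof is correct and follows essentially the same route as the paper: the base case $j=0$ via the multiplier inclusion condition at level $N-k$ together with $\|R^k h\|_{B^{N-k}_\om}\le\|h\|_{B^N_\om}$, and the inductive step via the identity $(R^{j}\varphi_i)(R^kh)=R((R^{j-1}\varphi_i)(R^kh))-(R^{j-1}\varphi_i)(R^{k+1}h)$ combined with $\|Rg\|_{B^m_{\tilde\om}}\le\|g\|_{B^{m+1}_{\tilde\om}}$. The only difference is notational (you increment $j$ where the paper decrements), so there is nothing further to add.
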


Of particular interest is the case $k=0$. In compact form it implies that under the hypothesis of the Lemma there is $c>0$ such that for each $j$ with $0\le j\le N$
$$\|R^j\Phi\|_{\Mult(B^N_\om,B^{N-j}_{\tilde{\om}}(\ell_2))}\le c  \|\Phi\|_{\Mult(B^N_\om,B^N_{\tilde{\om}}(\ell_2))}.$$

\begin{proof} Suppose $\|\Phi\|_{\Mult(B^N_\om,B^{N}_{\tilde{\om}}(\ell_2))}= \|\Phi^C\|_{(B^N_\om,B^{N}_{\tilde{\om}})}\le 1$. The multiplier inclusion condition  implies that there is $c>0$ such that for each $ 0 \le k\le n$ we have $\|\Phi\|_{\Mult(B^{N-k}_\om,B^{N-k}_{\tilde{\om}}(\ell_2))} \le c$ and hence for each $h\in B^N_\om$ we have
 \begin{align*} \sum_{i=1}^\infty \|\varphi_iR^{k}h\|^2_{B^{N-k}_{\tilde{\om}}} \le c \|R^{k} h\|^2_{B^{N-k}_\om} \lesssim c\|h\|^2_{B^N_\om}.
 \end{align*}

Thus the Lemma holds for $j=0$ and any $0 \le k\le N$, and we claim that the case of $j>0$ can be reduced to the case of $j=0$.

If $j>0$ and $j+k \le N$, then $$ (R^j\varphi_i) R^kh= R((R^{j-1}\varphi_i)R^kh)-(R^{j-1}\varphi_i) R^{k+1}h$$ and hence
\begin{align*} \sum_{i=1}^\infty &\|(R^j\varphi_i)R^{k}h\|^2_{B_{\tilde{\om}}^{N-(j+k)}} \\ &\le 2\sum_{i=1}^\infty \|R((R^{j-1}\varphi_i)R^{k}h)\|^2_{B_{\tilde{\om}}^{N-(j+k)}}+ \|(R^{j-1}\varphi_i) R^{k+1}h\|^2_{B_{\tilde{\om}}^{N-(j+k)}}\\
&\lesssim \sum_{i=1}^\infty \|(R^{j-1}\varphi_i)R^{k}h\|^2_{B_{\tilde{\om}}^{N-(j-1+k)}}+ \|(R^{j-1}\varphi_i) R^{k+1}h\|^2_{B_{\tilde{\om}}^{N-((j-1)+k+1)}}.
 \end{align*}
This means that the proof of the Lemma for $j>0$ has been reduced to the case of $j-1$. Hence finitely many iterations of this argument conclude the proof.
\end{proof}

  \begin{theorem}\label{RowColumnOp}  Let    $\HH$ and $\HK$ be weighted  Besov spaces such that $(\HH,\HK)$ satisfies the multiplier inclusion condition (\ref{3.3}).

   Then  there is a $c>0$ such that $$\|\Phi^R\|_{(\HH,\HK)}\le c\|\Phi^C\|_{(\HH,\HK)}$$ for all $\Phi=\{\varphi_1, \varphi_2,...\}\in   M^C(\HH,\HK)$.
  \end{theorem}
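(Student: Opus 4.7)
The plan is to use the Leibniz rule to expand $R^N(\sum_i \varphi_i h_i)$, reduce to $N+1$ cross terms indexed by the order of derivative falling on the $\varphi_i$'s, and in each term apply pointwise Cauchy--Schwarz in the $\ell_2$-direction so that Lemma \ref{Leibnitz estimate} can be applied termwise to the $h_i$'s. The main obstacle has already been dealt with upstream: the nontrivial work sits in Lemma \ref{Leibnitz estimate}, which is what converts the multiplier inclusion condition into control of mixed derivatives. Once that is in hand, the row-bound is a nearly mechanical consequence.

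Normalize so that $\|\Phi^C\|_{(\HH,\HK)} \le 1$. Write $\HK = B^N_{\tilde\om}$ with equivalent norm $\|f\|_{\HK}^2 \asymp |f(0)|^2 + \int_{\Bd} |R^N f|^2 \tilde\om\, dV$. For a finite tuple $(h_1,\dots,h_M) \in \HH^M$ (we will pass to infinite sums at the end by monotone convergence), apply the Leibniz rule for $R^N$:
\begin{equation*}
R^N\Bigl(\sum_{i=1}^M \varphi_i h_i\Bigr) = \sum_{k=0}^N \binom{N}{k} \sum_{i=1}^M (R^k \varphi_i)(R^{N-k} h_i).
\end{equation*}
By the triangle inequality in $L^2(\tilde\om\, dV)$ it suffices, for each fixed $k \in \{0,1,\dots,N\}$, to control
\begin{equation*}
A_k := \int_{\Bd} \Bigl|\sum_{i=1}^M (R^k\varphi_i)(z)(R^{N-k}h_i)(z)\Bigr|^2 \tilde\om(z)\, dV(z).
\end{equation*}

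Applying pointwise Cauchy--Schwarz in $\ell^2$ to the inner sum gives
\begin{equation*}
A_k \le \int_{\Bd} \Bigl(\sum_{i=1}^M |R^k\varphi_i(z)|^2\Bigr)\Bigl(\sum_{i=1}^M |R^{N-k} h_i(z)|^2\Bigr) \tilde\om(z)\, dV(z).
\end{equation*}
Swap the outer integral with the sum over $i$ on the right factor to obtain $A_k \le \sum_{i=1}^M \int_{\Bd} (\sum_j |R^k\varphi_j|^2) |R^{N-k}h_i|^2 \tilde\om\, dV$. Now apply Lemma \ref{Leibnitz estimate} with parameters $j=k$ and $k_{\mathrm{lem}}=N-k$ (so that $j+k_{\mathrm{lem}}=N$ and the target space is $B^0_{\tilde\om} = L^2_a(\tilde\om)$) to each $h_i$ individually: for every $h \in \HH$,
\begin{equation*}
\int_{\Bd} \Bigl(\sum_{j} |R^k\varphi_j|^2\Bigr) |R^{N-k} h|^2 \tilde\om\, dV = \sum_j \|(R^k\varphi_j)(R^{N-k}h)\|^2_{L^2_a(\tilde\om)} \le c\|h\|_{\HH}^2.
\end{equation*}
Summing this estimate over $i=1,\dots,M$ yields $A_k \le c\sum_i \|h_i\|_{\HH}^2$. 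Summing over $k=0,\dots,N$ then gives $\|R^N(\sum_i \varphi_i h_i)\|^2_{L^2_a(\tilde\om)} \lesssim \sum_i \|h_i\|_{\HH}^2$.

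The $|f(0)|^2$ contribution is handled separately: the standard pointwise estimate $\sum_j |\varphi_j(0)|^2 = \|\Phi(0)\|^2_{\ell^2} \le \|\Phi^C\|^2_{(\HH,\HK)}$ combined with boundedness of point evaluation on $\HH$ gives $|\sum_i \varphi_i(0) h_i(0)|^2 \le (\sum_i |\varphi_i(0)|^2)(\sum_i |h_i(0)|^2) \lesssim \sum_i \|h_i\|_{\HH}^2$. Combining the two contributions produces a uniform bound $\|\sum_{i=1}^M \varphi_i h_i\|_{\HK}^2 \le c'\sum_{i=1}^M \|h_i\|_{\HH}^2$, independent of $M$. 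The same estimate applied to tail sums shows the partial sums form a Cauchy sequence in $\HK$ whenever $\sum_i \|h_i\|_{\HH}^2 < \infty$, so $\Phi^R$ extends to a bounded row operator with norm controlled by a constant times $\|\Phi^C\|_{(\HH,\HK)}$, as desired.
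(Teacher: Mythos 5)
Your proposal is correct and follows essentially the same route as the paper: Leibniz expansion of $R^N(\sum_i\varphi_i h_i)$, pointwise Cauchy--Schwarz in the $\ell^2$ direction for each of the $N+1$ terms (the paper writes this as a symmetrized double sum, which amounts to the same inequality), and then a termwise application of Lemma \ref{Leibnitz estimate} with $j+k=N$ so the target is $L^2_a(\tilde\om)$. The only cosmetic difference is in handling the value at $0$ and the convergence of the infinite sum, where the paper uses the reproducing-kernel estimate $\sum_i|\varphi_i(z)|^2\le\|k^2_z\|^2/\|k^1_z\|^2$ while you use $\|\Phi(0)\|_{\ell^2}\le\|\Phi^C\|$ plus a Cauchy-sequence argument on partial sums; both are fine.
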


\begin{proof}  By hypothesis we may choose $N\in \N$ and admissible weights $\om, \tilde{\om}$ so that $\HH=B^N_\om$, $\HK=B^N_{\tilde{\om}}$, and so that Lemma \ref{Leibnitz estimate} applies.

 Let $\Phi=\{\varphi_i\}$ be a sequence of analytic functions on $\Bd$ with  $$\|\Phi^C h\|^2_{\HK(\ell_2)}=\sum_{i=1}^\infty \|\varphi_i h\|^2_{\HK} \le \|h\|^2_{\HH} \text{ for all }h \in \HH. $$ We have to show the existence of $c>0$ such that
$$\|\sum_{j=1}^\infty \varphi_j h_j\|^2_{\HK} \le c \sum_{j=1}^\infty \|h_j\|^2_{\HH} \text{ for all } \{h_j\}\in \bigoplus_{j=1}^\infty \HH.$$
We have $$\|f\|^2_\HK \approx |f(0)|^2+ \|R^Nf\|^2_{L^2_a(\tilde{\om})}.$$

Let $k^1_z$ be the reproducing kernel for $\HH$ and $k^2_z$ be the reproducing kernel for $\HK$. Then
$$\sum_{i=1}|\varphi_i(z)|^2 = \sum_{i=1}\frac{|\la\varphi_ik^1_z, k^2_z\ra_\HK|^2}{\|k^1_z\|^4_{\HH}}\le \frac{ \|k^2_z\|^2_{\HK}}{\|k^1_z\|^2_{\HH}}.$$
This implies that for any   $ \{h_j\}\in \bigoplus_{j=1}^\infty \HH$
$$\sum_{j=1}^\infty |\varphi_j(z) h_j(z)|\le \frac{ \|k^2_z\|_{\HK}}{\|k^1_z\|_{\HH}}\left(\sum_{j=1}^\infty|h_j(z)|^2\right)^{1/2}\le \|k^2_z\|_{\HK} \left( \sum_{j=1}^\infty\|h_j\|_{\HH}^2\right)^{1/2}.$$

Taking $z=0$ we note that it suffices to show that $$\|R^N(\sum_{j=1}^\infty \varphi_j h_j)\|_{L^2_a(\tilde{\om})}^2 \le C\sum_{j=1}^\infty \|h_j\|_{\HH}^2.$$

 By the above the series $\sum_{j=1}^\infty \varphi_j h_j$ converges uniformly on compact subsets of $\Bd$, thus by analyticity and the Leibnitz rule we have
\begin{align*}|R^N \left(\sum_{j=1}^\infty \varphi_j h_j\right)|^2
&= |\sum_{k=0}^N \left(\begin{matrix}N\\k\end{matrix}\right)\sum_{j=1}^\infty (R^k\varphi_j) (R^{N-k}h_j)|^2\\
&\le c \sum_{k=0}^N \left( \sum_{j=1}^\infty |R^k\varphi_j| |R^{N-k}h_j|\right)^2\\
&=c \sum_{k=0}^N  \sum_{j=1,i=1}^\infty |R^k\varphi_j| |R^{N-k}h_j| |R^k\varphi_i| |R^{N-k}h_i|\\
&\le c \sum_{k=0}^N \sum_{j=1,i=1}^\infty |R^k\varphi_j R^{N-k}h_i|^2.
\end{align*}
Now we integrate both sides of the inequality over $\Bd$ against the measure $\tilde{\om} dV$ and obtain
\begin{align*}\|R^N \left(\sum_{j=1}^\infty \varphi_j h_j\right)\|_{L^2_a(\tilde{\om})}^2&\le c \sum_{k=0}^N \sum_{i=1}^\infty \left(\sum_{j=1}^\infty\|R^k\varphi_jR^{N-k}h_i\|_{L^2_a(\tilde{\om})}^2\right) \\
&\lesssim \sum_{k=0}^N \sum_{i=1}^\infty \|h_i\|^2_{B^N_\om}\lesssim \sum_{i=1}^\infty \|h_i\|^2_\HH \end{align*} by Lemma \ref{Leibnitz estimate}.
\end{proof}

\subsection{A bounded row, but unbounded column operator}
In this Section we will show that if $d\ge 2$ then there are sequences of multipliers $\Phi=\{\varphi_1,\varphi_2,...\}$ of the Drury-Arveson space $H^2_d$ such that $\Phi^R$ is bounded $\bigoplus_{j=1}^\infty H^2_d\to H^2_d$, but $\Phi^C$ is unbounded  $H^2_d\to \bigoplus_{j=1}^\infty H^2_d$.
The proof is patterned after an example of Trent for the Dirichlet space, \cite{TrentCorona}. The case at hand requires more work due to the fact that for the Drury-Arveson kernel $k$ the expression $1-1/k_w(z)$ is a sum of only finitely many terms of the form $\varphi_i(z) \overline{\varphi_i(w)}$.

For $n \in \N$ let $S_n=\{\mu=(\mu_1,...,\mu_n): \mu_j\in \{1,2,...,d\}\}$ be the set of $n-$tuples of elements in $\{1,...,d\}$. Set $S=\bigcup_{n=1}^\infty S_n$ and for $\mu \in S$ write $l(\mu)=n$ if $\mu\in S_n$, the length of $\mu$.

Since $S$ is countable it will suffice to exhibit a family of functions $\{\varphi_\mu\}_{\mu\in S}$ on $\Bd$ such that
$$\|\sum_{\mu\in S} \varphi_\mu h_\mu\|^2 \le C \sum_{\mu\in S}\|h_\mu\|^2 \text{ whenever } h_\mu \in H^2_d \text{ and }
\sum_{\mu\in S} \|\varphi_\mu\|^2 =\infty.$$

For $\mu\in S_n$   and $z\in \C^d$ set $z_\mu= z_{\mu_1}z_{\mu_2}\cdots z_{\mu_n}$.

Define
$\varphi_\mu(z)=\frac{1}{l(\mu)}z_{\mu}$. Note that for each $\mu\in S_n$ there is a multi index $\alpha=(\alpha_1,...,\alpha_d)\in \N_0^d$ with $|\alpha|=n$ and such that $\varphi_\mu(z)=\frac{1}{n} z^\alpha$. Conversely, if $\alpha\in \N_0^d$ is a multi index with $|\alpha|=n$, then there are $\frac{|\alpha|!}{\alpha!}$ distinct $\mu \in S_n$ with $\varphi_\mu(z)=\frac{1}{n} z^\alpha$. Recall that in $H^2_d$ we have $\|z^\alpha\|^2= \frac{\alpha !}{|\alpha|!}$.

Thus
\begin{align*}\sum_{\mu\in S} \|\varphi_\mu\|^2
&=\sum_{n=1}^\infty \frac{1}{n^2} \sum_{\mu\in S_n}\|z_{\mu_1}z_{\mu_2}\cdots z_{\mu_n}\|^2\\
&=\sum_{n=1}^\infty \frac{1}{n^2} \sum_{|\alpha|=n}\frac{|\alpha| !}{\alpha!} \|z^\alpha\|^2 \\
&=\sum_{n=1}^\infty \frac{1}{n^2} \sum_{|\alpha|=n} 1\\
&\ge \sum_{n=1}^\infty \frac{1}{n} =\infty
\end{align*}
since for each $n \in \N$ there are $n$ multi-indices $\alpha$ of the type  $\alpha=(k,n-k,0,...,0)$. Here we use $d\ge 2$.

On the other hand consider
\begin{align*} \|\sum_{\mu\in S}^\infty \varphi_\mu h_\mu\|^2 = \|\sum_{n=1}^\infty \frac{1}{n}\sum_{\mu\in S_n} z_\mu h_\mu\|^2 \le  \left(\sum_{n=1}^\infty \frac{1}{n^2}\right)\left(\sum_{n=1}^\infty \|\sum_{\mu\in S_n}z_\mu h_\mu\|^2\right)
\end{align*}

We now show by induction that for each $n\ge 1$ we have $$\|\sum_{\mu\in S_n}z_\mu h_\mu\|^2 \le \sum_{\mu\in S_n}\|h_\mu\|^2$$ and that will finish the proof.

 The case $n=1$ follows since
 $M_z$ is a $d$-contraction, i.e. $\|\sum_{k=1}^d z_k f_k\|^2 \le \sum_{k=1}^d \|f_k\|^2$. This shows that $\|\sum_{\mu\in S_1}z_\mu h_\mu\|^2\le \sum_{\mu\in S_1} \|h_\mu\|^2$. Now suppose that the claim holds for some $n\ge 1$. We will show that it holds for $n+1$. Note that $S_{n+1}=S_1 \times S_n$, hence  as above

 \begin{align*}
 \|\sum_{\mu\in S_{n+1}}z_\mu h_\mu\|^2&= \|\sum_{k=1}^dz_k \sum_{\mu'\in S_n} z_{\mu'}h_{(k,\mu')}\|^2\\
 &\le \sum_{k=1}^d\| \sum_{\mu'\in S_n} z_{\mu'}h_{(k,\mu')}\|^2\\
 &\le \sum_{k=1}^d \sum_{\mu'\in S_n}\|h_{(k,\mu')}\|^2= \sum_{\mu\in S_{n+1}}\|h_\mu\|^2
 \end{align*} by the induction hypothesis.

Note added in Proof: The recent paper \cite{JuryMartin} shows that if a complete Pick space $\HH$
satisfies $M^C(\HH)\subseteq M^R(\HH)$, then $\HH \odot \HH = \HH \cdot \HH =\{fg: f, g\in \HH\}$.

\include{biblio}
\bibliography{Biblio_WeakProduct_Quotients}

 \end{document}